\newtheorem{lemma}{Lemma}
\newtheorem{remark}{Remark}
\newtheorem{corollary}{Corollary}
\newtheorem{assumption}{Assumption}
\newtheorem{theorem}{Theorem}
\newcommand{\rd}{\mathrm{d}}
\begin{document}

\title[]{Long-time asymptotics of non-degenerate non-linear diffusion equations}

\author{Ivan C. Christov}
\email{christov@purdue.edu}
\affiliation{School of Mechanical Engineering, Purdue University, West Lafayette, Indiana 47907, USA}

\author{Akif Ibraguimov}
\email{Akif.Ibraguimov@ttu.edu}
\affiliation{Department of Mathematics \& Statistics, Texas Tech University, Lubbock, Texas 79409, USA}

\author{Rahnuma Islam}
\email{rahnuma.islam@ttu.edu}
\affiliation{Department of Mathematics \& Statistics, Texas Tech University, Lubbock, Texas 79409, USA}
\date{\today}

\begin{abstract}
We study the long-time asymptotics of prototypical non-linear diffusion equations. Specifically, we consider the case of a non-degenerate diffusivity function that is a (non-negative) polynomial of the dependent variable of the problem. We motivate these types of equations using Einstein's random walk paradigm, leading to a partial differential equation in non-divergence form. On the other hand, using conservation principles leads to a partial differential equation in divergence form. A transformation is derived to handle both cases. Then, a maximum principle (on both an unbounded and a bounded domain) is proved, in order to obtain bounds above and below for the time-evolution of the solutions to the non-linear diffusion problem. Specifically, these bounds are based on the fundamental solution of the linear problem (the so-called Aronson's Green function). Having thus sandwiched the long-time asymptotics of solutions to the non-linear problems between two fundamental solutions of the linear problem, we prove that, unlike the case of degenerate diffusion, a non-degenerate diffusion equation's solution converges onto the linear diffusion solution at long times. Select numerical examples support the mathematical theorems and illustrate the convergence process. Our  results have implications on how to interpret asymptotic scalings of potentially anomalous diffusion processes (such as in the flow of particulate materials) that have been discussed in the applied physics literature.
\end{abstract}


\maketitle

\section{Introduction}
\label{sec:intro}

Almost two centuries ago, Robert Brown observed the apparently random motion of pollen particles on the surface of a liquid layer \citep{Brown28}. Since then, what has become known as ``Brownian motion'' \citep{Frey04} continues to offer scientific insights into microscopic phenomena, including in frontier areas such as microrheological measurements of complex fluids \citep{Zia18}. However, Brown's work did not yield a working theory. Three-quarters of a century later, Einstein proposed the first complete mathematical description of this phenomenon \citep{Einstein05}. Specifically, Einstein showed that the spread of the pollen particles obeys a \emph{diffusion} process, when viewed macroscopically in the sense of a probability distribution of where the particles might be found. The diffusion process arises from the random motion (walk) of the pollen particles caused by their endless collisions with the thermally agitated molecules of the fluid in which they are suspended. The final piece in solving the puzzle of Brown's experiments was Einstein's determination (contemporaneously with \citet{Sutherland05} and \citet{Smoluchowski06}) of the diffusivity in terms of the fluid properties (viscosity, temperature) via the previous result of Stokes on viscous fluid drag \citep{Stokes51}.

In this classical example of diffusion, the process is governed by a \emph{linear} equation, specifically a linear evolutionary parabolic partial differential equation (PDE) for the probability, concentration or another related quantity that describes the collection of particles at the macroscale. The diffusivity is constant and set by the Stokes--Einstein formula. Since this classical work, however, diffusion equations have been derived (and analyzed) as the governing equation of various other phenomena as well, ranging from flows through porous media \citep{Barenblatt52,Philip70} to the motion of free interfaces bounding thin liquid films \citep{Oron97} to high-temperature shock wave phenomena \citep{Zel67}. In contrast to Brownian motion and Einstein's theory, the latter examples lead to \emph{non-linear} diffusion problems in which the diffusivity is a function of the dependent variable (say, concentration or probability).  Interestingly, almost all of these examples feature \emph{degenerate} non-linear diffusion, i.e., the diffusivity vanishes when the dependent variable (or its gradient \citep{Celik17}) vanishes \citep{Vazquez07}. 

A more recent example of a diffusion process concerns the flows of granular materials, such as sand. These materials are macroscopic and the thermal fluctuations of ``molecules'' are irrelevant. The granular material must be driven by external forces to flow, which gives rise to collective diffusion. Specifically, during flow, particles (grains) collide with each other. Although these collisions are deterministic, they occurs so often and in such variety that one may consider the end result to be random particle velocity fluctuations, much like to those imparted on pollen by the fluid's molecules in Brown's 1828 experiments. For a collection of identical (in shape, size and density) particles, one might expect that the diffusivity is independent of concentration \citep{Lacey54,Savage93}, and indeed experiments support this claim \citep{Cahn66,Zik91}. 
However, granular materials are often highly heterogeneous, i.e., they are mixtures of particles of different shapes, size and density  \citep{Ottino00,Umbanhowar19}. In the case of particles of different sizes, depending on the mixture proportions, the positive time interval between small-small or small-large binary collisions depends on the number of particles of a given type locally. The small-small and small-large collisions are not identical as the larger particles impact a bigger force onto the smaller ones, therefore produce a different free jump. Nonetheless, the free jump frequency distribution itself could be assumed independent of the particle concentration because the jumps are entirely set by the collision physics (mass, velocity, coefficient of restitution, etc.). 

Then, there are three main parameters that are involved in Einstein's random walk paradigm \citep{Einstein05}: the frequency distribution of free jumps $\varphi(\delta)$, the length of a free jump $\delta$, and time interval $\tau$ within which particles perform a free jump. A thought experiment along the lines of Einstein can incorporate non-linearity in the random walk model. The influence of particle concentration can be taken into account through the interval of free jumps. In \S\ref{Einstein principle}, we will discuss how the diffusivity will depend, through $\tau$, on how many large (or small particles) are in the vicinity of a spatial location. This observation leads to concentration-dependent diffusion of poly-disperse granular materials \citep{Fischer09}. Interestingly, however, the non-constant diffusivity in these cases is \emph{not} degenerate \citep{Ristow99,Dury99}, as even in the absence of large particles nearby, the small particles still collide and, thus, ``diffuse'' \citep{Christov12}.

Thus, we have provided ample motivation that in many applications involving the flow of liquids, gases, and even particulates, the dependent variable in the problem, such as density, pressure or concentration, can be governed by a non-linear evolutionary parabolic PDE, i.e., a diffusion equation. In the case of degenerate diffusivity, a large mathematical literature exists discussing the qualitative properties of solutions, asymptotics, and so on \citep{DiBenedetto93,Vazquez07}. On the other hand, the case of non-degenerate diffusion has not received as much attention. Nevertheless, in applications involving such PDE, one observes ``regular'' diffusive scalings at long times (in the sense of \emph{intermediate asymptotics} \citep{Barenblatt96}), which has caused no small amount of controversy in interpreting experiment and simulation data \citep{Christov12}.\footnote{Here, by ``scalings'' we mean that the solution $u(x,t)$ can, at any $t$ sufficiently large, be transformed/collapsed as $u(x,t) \mapsto  \kappa_1 t^{\mathfrak{n}_1}U\big(\kappa_2 xt^{-\mathfrak{n}_2}\big)$, to a good approximation, into a universal profile $U(\cdot)$, for some suitable dimensional constants $\kappa_{1,2}$ and \emph{scaling exponents} $\mathfrak{n}_{1,2}$.}

Motivated by the need for a clear mathematical answer regarding the long-time behavior of solutions (distinct from the short-time behavior studied by \citet{Christov12,Sekimoto19}) of non-degenerate non-linear parabolic equations, in the present work, we establish new mathematical results showing that the long-time asymptotics of solutions are given by a Gaussian profile and its corresponding ``normal'' (as apposed to ``anomalous'') scalings obtainable from the linear diffusion equation. Specifically, using the maximum principle (\S\S\ref{sec:max_bd} and \ref{sec:max_ubd}), we obtain estimates, from above and below, on the solution of the non-linear problem (applicable to equations in both divergence and non-divergence form, as shown in \S\ref{mapping}), proving that the solution to the non-linear problem converges to the Green's function of the linear problem (suitably shifted). These mathematical results are illustrated via numerical simulations in \S\ref{sec:numerical}, and conclusions are stated in \S\ref{sec:conclusion}.


\section{Position of the problem}
As a generic model for all the previously mentioned phenomena, consider:
\begin{equation}\label{p-u-equation} 
\Tilde{L}u=\frac{\partial u}{\partial t} - \frac{\partial}{\partial x}\left( P(u)\frac{\partial u}{\partial x}\right)=0,\qquad (x,t)\in (-\infty,+\infty)\times(0,\infty),
\end{equation}
where $P(u)$ is a polynomial with $P(u) > 0$ $\forall u\ge 0$, and $u$ is a scalar quantity that is characteristic of the physical systems (to be made precise below). Equation~\eqref{p-u-equation} is subject to a non-negative localized initial condition:
\begin{equation}
u(x,0) = u_0(x).
\label{eq:u_ic}
\end{equation}
To fully pose the problem, appropriate growth conditions should be satisfied at $|x|=\infty$ by the solution and the initial condition:
\begin{equation}
\lim_{|x|\to\infty} u(x,t) |x|^{\gamma} = 0 \qquad \forall t\ge 0,
\label{eq:decay_bc}
\end{equation}
for some $\gamma>0$. The solution, which is initially non-negative, should remain non-negative for all times:
\begin{equation}
u(x,t) \ge 0, \qquad \forall (x,t)\in (-\infty,+\infty)\times[0,\infty).
\end{equation}

Observe that solutions to Eqs.~\eqref{p-u-equation}--\eqref{eq:decay_bc} obey a conservation principle (see also \S\ref{conserv-law}): $\int_{-\infty}^{+\infty} u(x,t)\, \rd x = \int_{-\infty}^{+\infty} u_0(x)\, \rd x > 0$ $\forall t\ge0$, as easily shown by direct integration and application of the decay condition as $|x| \to \infty$. 

Since we are studying a generic mathematical problem, we do not concern ourselves with the units. Physically, this just means that we have made $x$ dimensionless by some characteristic domain length $x_c$, we have made $P$ dimensionless by some characteristic diffusivity $P_c$ (such that, for $P\mapsto P/P_c$, we may take $P(0)=1$ now), we have made $u$ dimensionless by some characteristic scale $u_c$ (say, such that for $u\mapsto u/u_c$, $\int_{-\infty}^{+\infty} u(x,t)\, \rd x = 1$ $\forall t\ge0$), and we have made $t\mapsto t/t_c$ dimensionless by the characteristic diffusion time $t_c = x_c^2/P_c$. Here, the subscript `$c$' stands for `characteristic.'

In Eq.~\eqref{p-u-equation}, we consider the case in which $P(u)$ is a polynomial such that $P(u) > 0$ $\forall u\ge 0$. Additionally, we will assume $P(u)$ has no real roots. Hence, Eq.~\eqref{p-u-equation} is non-degenerate, which is specifically the case of interest here. A particular example we are interested in, based on previous studies \citep[see, e.g.,][]{Ristow99,Christov12}, is $P(u) = a_0 + a_1 u$ with $a_0>0$, which is of the form assumed above. Equation \eqref{p-u-equation} is usually derived from conservation principles (see \S\ref{conserv-law}) in conjunction with the physical law that the flux is proportional to the gradient of the scalar function $u$. Examples of the latter law are Darcy's, Fourier's, Fick's, etc.\  \citep{BSL}.

At the same time, this process can be also be modeled using the Einstein paradigm (\S\ref{Einstein principle}), which he used to derive the linear diffusion equation describing Brownian motion macroscopically. Therefore, we next discuss this derivation, in detail, for the non-linear case. We will show that the governing PDE has non-divergence form and can be derived from a probabilistic model via a thought experiment. To connect the non-divergence-form equation (obtained from Einstein's paradigm) to the divergence-form equation~\eqref{p-u-equation} (obtained from a conservation principle), in \S\ref{mapping}, we show there exists a closed-form mapping between the solutions of the two.

\section{Einstein paradigm: ``From random motion of particles to diffusion"}
\label{Einstein principle}

In this section, we explain how non-linearity can be incorporated into Einstein's random walk model of Brownian motion. In the celebrated work of \citet{Einstein05,Einstein56}, a mathematical model is derived on the basis of a thought experiment (or, \emph{Gedankenexperiment} in Einstein's own terminology \citep{Gedank}). On the basis of its generality, this model can then be applied to a number of physical processes, in which a random walk occurs \citep[see, e.g.,][\S1.2.1]{G09}, arising in physics, chemistry and engineering. Here, for completeness and clarity, we  summarize Einstein's derivation, exactly as it appears in his original work. In doing so, we highlight the points of departure of the non-linear model considered in the subsequent sections of the present work. 

Four axioms are used to complete the derivation, which we formulate as assumptions:

\begin{assumption}
There exists a time interval $\tau$, which is very small compared to the observable time intervals but large enough that the motions, performed by particles during two consecutive time intervals $\tau$, can be considered as mutually independent events.
\end{assumption}

\begin{assumption}
The distance traveled during the time interval $\tau$, without undergoing a collision, is called the ``free jump" and has a finite size, $\delta$. 
\end{assumption}
    
\begin{assumption}
The particles are not allowed to interact chemically (i.e., they cannot agglomerate, or breakup, or react with a solvent).  
\end{assumption}

Let the total number of particles present in the system be $N$. The number $\rd N$ of particles experiencing a displacement that lies between $\delta$ and $\delta +\rd \delta$ in the time interval $\tau$ is given by
\begin{equation}\label{dn}
     \rd N = N \varphi(\delta) \,\rd \delta,
\end{equation}
where $\varphi$ is the probability density function of particle jumps such that 
\begin{equation}\label{phi_normalization}
 \int_{\delta_\mathrm{min}}^{\delta_\mathrm{max}}\varphi(\delta) \,\rd\delta = 1.
\end{equation}
Einstein assumed that $\varphi$ is localized, i.e., it differs from zero only in a range of $\delta$ values about $\delta =0$. This assumption seems natural, but it does not have always be true for all physical diffusion processes.

\begin{remark}\label{einsteins-parad}
Note that in the Einstein paradigm, $\tau$,  $\delta$ and $\varphi$ are characteristics of the physical process. In general, these three parameters can be functions of both the spatial variable $x$ and the time variable $t$, as well as other physical quantities (depending on the problem). Furthermore, at this point, nothing prevents $\tau$, $\delta$ and $\varphi$ from also being functions of the dependent variable (and its derivatives), such as the number of particles $N$.

In the present work, however, we will assume that the length of the free jumps  $\delta$ and their frequency distribution $\varphi(\delta)$ are fixed (by the underlying physics) w.r.t.\ $N$. Then, the only parameter involved that can depend on $N$ is the time interval $\tau$, which leads to the non-linear nature of the diffusion process below.
\end{remark}

\begin{assumption}\label{conservation law}
Let $f(x, t)$ be the number of particles per unit volume. Then, the number of particles found at time $t+ \tau$ between two planes perpendicular to the $x$-axis, with abscissas $x$ and $x+ \rd x$, is given by
\begin{equation}\label{Einstein_conserv_eq}
f(x, t+\tau) \cdot \rd x =   \left(\int_{\delta_\mathrm{min}}^{\delta_\mathrm{max}} f(x+ \delta, t) \varphi(\delta) \rd  \delta \right)\cdot \rd x .
\end{equation}
\end{assumption} 
Next, by Caratheodory's theorem, there exists a function $\psi(x,t)$ such that 
\begin{equation}\label{Caratheodory}
f(x, t+ \tau)= f(x, t)+ \tau \psi(x,t+\tau) ,
\end{equation}
where
\begin{equation}
  \lim_{\tau\to 0} \psi(x,t+\tau)= \frac{\partial f(x,t)}{\partial t}.
\end{equation}
However, we shall not formally take the limit. Instead, since $\tau \ll t$ (see Remark~\ref{tau-finite} below), we make the approximation
\begin{equation}
    \psi(x,t+\tau)\approx \frac{\partial f(x,t)}{\partial t}.
\end{equation}

Nest, using the Taylor  expansion of $f(x+\delta, t)$ in powers of $\delta$, we obtain
\begin{equation}\label{taylor1}
 f(x+\delta, t)= f(x, t)+ \delta \frac{\partial f}{\partial x} + \frac{\delta^2}{2!} \frac{\partial^{2} f}{\partial x^{2}} + \cdots  .
\end{equation}
Thus, we obtain from Eqs.~\eqref{Einstein_conserv_eq} and \eqref{taylor1}:
\begin{equation}
    f(x, t)+ \tau \frac{\partial f}{\partial t}=  f(x, t)\underbrace{\int_{\delta_{min}}^{\delta_{max}} \varphi(\delta) \,\rd \delta}_{=1\text{ by Eq.~\eqref{phi_normalization}}} + \frac{\partial f}{\partial x} \int_{\delta_\mathrm{min}}^{\delta_\mathrm{max}} \delta \varphi(\delta) \,\rd \delta + \frac{\partial^{2} f}{\partial x^{2}} \int_{\delta_\mathrm{min}}^{\delta_\mathrm{max}} \frac{\delta^2}{2!} \varphi(\delta) \,\rd \delta + \cdots ,
\end{equation}
or
\begin{equation}\label{taylor3}
     \tau \frac{\partial f}{\partial t}= \int_{\delta_\mathrm{min}}^{\delta_\mathrm{max}} \delta \varphi(\delta) \,\rd \delta \cdot\frac{\partial f}{\partial x}   + \int_{\delta_\mathrm{min}}^{\delta_\mathrm{max}} \frac{\delta^2}{2!} \varphi(\delta) \,\rd \delta\cdot \frac{\partial^{2} f}{\partial x^{2}}  + \cdots.
\end{equation}

Now, we impose restrictions on the jump size distribution $\varphi$, to make precise Einstein's assumption on the localized nature of $\varphi$:

\begin{assumption}\label{delta^2>delta^4}
$|\delta_\mathrm{min}|\gg|\delta_\mathrm{min}|^2$ and $|\delta_\mathrm{max}|\gg|\delta_\mathrm{max}|^2$.
\end{assumption}

\begin{assumption} \label{Symmetry in movements} 
$\varphi(\delta) = \varphi(-\delta)$.
\end{assumption}

Due to Assumption \ref{delta^2>delta^4}, we keep only second-order (in $\delta$) terms in the right-hand side of Eq.~\eqref{taylor3}. Due to Assumption \ref{Symmetry in movements}, the first and further  odd moments vanish.  Now, we define the \emph{diffusivity} (diffusion coefficient) $D$ via the second moment of $\varphi$:
\begin{equation}\label{D-def}
    \frac{1}{\tau} \int_{\delta_\mathrm{min}}^{\delta_\mathrm{max}} \frac{\delta^2}{2!} \varphi(\delta) \,\rd \delta = D,
\end{equation}
whence Eq.~\eqref{taylor3} leads to the well-known linear diffusion equation for the function $f$ counting the number of particles per unit volume:
\begin{equation} \label{eq:1}
\frac{\partial f}{\partial t}= D  \frac{\partial^{2} f}{\partial x^{2}}.
\end{equation}

\begin{remark}\label{non-linarity}
According to our interpretation of the Einstein paradigm (Remark \ref{einsteins-parad}), the diffusion coefficient $D$ in Eq.~\eqref{D-def} can be a function of $x$ and $t$, or even $f$, via the time interval $\tau$. In other words, unlike previous works that apply the Einstein paradigm to non-linear diffusion \citep{Boon07,Lenzi19}, the non-linearity in the present context comes into play via $\tau$ and its possible direct dependence on $f$ specifically.
\end{remark}

\begin{remark}\label{tau-finite}
``Einstein's derivation is really based on a discrete time assumption, that impacts happen only at times 0, $\tau$, 2$\tau$, 3$\tau$, ...'' \citep[p.~5]{G09}. In other words, in the Einstein paradigm (Remark \ref{einsteins-parad}), $\tau$ is considered to be the \emph{finite} time between particle collisions. This microscopic time scale is assumed to be small compared to the observational (macroscopic) time scale, $\tau \ll t$, but it is \emph{not} taken to zero. Therefore, here, Eq.~\eqref{D-def} is interpreted as stated, not as a limiting process.
\end{remark}

\section{Non-linear parabolic equations in divergence and non-divergence form}\label{mapping}

\subsection{Non-linear model arising from the Einstein paradigm (equation in non-divergence form)}

First, we establish that the Einstein paradigm can be used to obtain a non-linear parabolic diffusion equation for a Brownian-like process.

\begin{assumption}\label{D(v)}
Let the number of particles per volume, $f(x,t)$, be linearly proportional to a scalar function, such as the concentration $v(x,t)$ in the medium, which is assumed to be homogeneous and isotropic. We  postulate that time-interval of free jumps $\tau \nearrow a_0^{-1}>0$ as $v\searrow0$. In other words, we consider the concentration-dependent diffusivity function 
\begin{equation}\label{D(v)-def}
D(v)=a_0+F(v),
\end{equation}
where $F(v)$ is a non-decreasing homogeneous function.
\end{assumption}

Under Assumption~\ref{D(v)} and for general for $x\in\mathbb{R}^d$, Eq.~\eqref{eq:1} takes the form 
\begin{equation}\label{non_diveregent_eq}
    Lv = \frac{\partial v}{\partial t}- D(v) \Delta v =0,
\end{equation}
where $D(v)$ is the diffusion coefficient for concentration $v$ at the point $(x,t)$, and $\Delta v = \sum_{i=1}^d\frac{\partial^2v}{\partial x_i^2} $ is the Laplacian operator applied to $v$. Taking the scalar function $v\geq 0$ to be non-negative, it follows from Assumption~\ref{D(v)} that Eq.~\eqref{non_diveregent_eq} is \emph{non-degenerate}.

\subsection{Non-linear model arising from the conservation law principle (equation in divergence form)}\label{conserv-law}

Another way to take into account of non-linearity is to employ
the more traditional conservation law for the density $\rho(x,t)$ of a substance of interest with attendant flux vector $\vec{J}(x,t)$ (i.e., the conservation of mass or ``continuity'' equation \citep{BSL,Dafermos16}). This equation, for a finite control volume, is written in divergence form as
\begin{equation}\label{conserv-dif-eq}
 \frac{\partial }{\partial t}\int_\mathcal{V}\rho \,\rd x + \oint_{\partial \mathcal{V}} \Vec{J}\cdot\vec{\nu} \,\rd s = 0 .
\end{equation} 
Here, $\mathcal{V}\subset \mathbb{R}^d$ is the control volume, $\partial \mathcal{V}\subset \mathbb{R}^{d-1}$ is its boundary (with unit normal vector $\vec{\nu}$), and $t\in\mathbb{R}$ is the time variable.

\begin{assumption}\label{therm}  
To allow for the consideration of different physical phenomena governed by the same equations, let the density  $\rho=A u+B$ be a linear function of some scalar $u(x,t)$, where $A>0$ and $B\geq 0$ are suitable dimensional constants. For example, $u$ can be the concentration.
\end{assumption}

\begin{assumption}\label{Fick}
Suppose that Fick's law \citep{Fick1855,Fick1855a} holds a.e. That is, the flux vector $\vec{J}$ is proportional to the gradient of concentration:
\begin{equation}\label{Fick-eq} 
    \vec{J}=-P \nabla \rho=-P A\nabla u,
\end{equation} 
where the proportionality factor $P$ is precisely the diffusivity (in general, a tensor of second rank) \citep{BSL}.
\end{assumption}

\begin{assumption}\label{incr-assump-cond}
In an isotropic medium, the diffusivity $P$ in Eq.~\eqref{Fick-eq} is a scalar. Specifically, let $P(\cdot)$ be a  non-decreasing function of $u$: $P(u)=a_0+G(u)$, where $G(u)\geq 0$.
\end{assumption}

\begin{remark}
Assumption~\ref{incr-assump-cond} is simply a restatement of Assumption~\ref{D(v)} but for the case of a divergence-form equation derived from the conservation law principle.
\end{remark}

Under the above assumptions, and applying Green's theorem for the arbitrary control volume $\mathcal{V}$, Eq.~\eqref{conserv-dif-eq} can be transformed to an equation in divergence form at space-time point $(x,t)$:
\begin{equation}\label{conserv-dif_eq}
\Tilde{L}u = \frac{\partial u}{\partial t} - \nabla\cdot\left(P(u) \nabla u\right) = 0 .
\end{equation}
Here, due to Assumption~\ref{incr-assump-cond}, the diffusivity is a non-decreasing function w.r.t.\ $u$, and we take the constant $a_0>0$ to be strictly positive. Then, for a scalar function $u\geq 0$ that is non-negative, $P(u)>0$ $\forall u$, and it follows that Eq.~\eqref{conserv-dif_eq} is \emph{non-degenerate}.


\subsection{Mapping between solutions of equation in divergence and non-divergence form}

The two governing diffusion equations, i.e., Eqs.~\eqref{conserv-dif_eq} and \eqref{non_diveregent_eq},
introduced above are obviously related. Now, we prove that,  for any positive polynomial function $P(u)$ with non-negative coefficients satisfying Assumption~\ref{incr-assump-cond}, there exists a function $D(v)$ satisfying Assumption \ref{D(v)} s.t.\  $P(u)=D(v)$.

\begin{theorem}\label{u-to-v-thm}
Let $P(u)$ be a polynomial 
\begin{equation}
    P(u)=a_0+a_1u+\cdots+a_nu^n
    \label{eq:P(u)}
\end{equation}
with all non-negative coefficients and at least one strictly positive coefficient.
Consider the transition formula 
\begin{equation}\label{u-to-v}
    v=\int_0^u P(\xi) \,\rd\xi.
\end{equation}
Then, from monotonicity of integral it follows that there exists a function $F(v)$ s.t.
\begin{equation}\label{P-D}
P(u)=D(v),
\end{equation}
where $D(v)=a_0 + F(v)$ as given in Eq.~\eqref{D(v)-def}.
\end{theorem}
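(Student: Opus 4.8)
The plan is to read \eqref{u-to-v} as a change of the dependent variable $u\mapsto v$ on the half-line and to show that this map is invertible, so that $D(v):=P\big(u(v)\big)$ is well defined and automatically satisfies \eqref{P-D}. First I would observe that, by Assumption~\ref{incr-assump-cond} (with $a_0>0$ and $a_1,\dots,a_n\ge 0$), the integrand obeys $P(\xi)\ge a_0>0$ for every $\xi\ge 0$. Hence
\begin{equation}
 v(u)=\int_0^u P(\xi)\,\rd\xi = a_0u+\tfrac{a_1}{2}u^2+\cdots+\tfrac{a_n}{n+1}u^{n+1}
\end{equation}
satisfies $v(0)=0$, $v'(u)=P(u)>0$ on $[0,\infty)$, and $v(u)\ge a_0u\to\infty$ as $u\to\infty$. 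Therefore $v(\cdot)$ is a strictly increasing (indeed smooth) bijection of $[0,\infty)$ onto $[0,\infty)$, and its inverse $u(\cdot)\colon[0,\infty)\to[0,\infty)$ is well defined, strictly increasing, and smooth (by the inverse function theorem, since $v'=P(u)\neq 0$), with $u(0)=0$.

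Next I would simply set $D(v):=P\big(u(v)\big)$; then $P(u)=D(v)$ holds identically along the curve $v=v(u)$, which is exactly \eqref{P-D}. Writing $P(u)=a_0+G(u)$ with $G(u)=a_1u+\cdots+a_nu^n\ge 0$ non-decreasing on $[0,\infty)$ (Assumption~\ref{incr-assump-cond}), define $F(v):=G\big(u(v)\big)=D(v)-a_0$. It then remains only to verify the qualitative properties required by Assumption~\ref{D(v)}: $F$ is a composition of the non-decreasing map $G$ with the non-decreasing map $u(\cdot)$, hence non-decreasing; $F(0)=G(u(0))=G(0)=0$; and $F\ge 0$, so $D(v)=a_0+F(v)\ge a_0>0$, which is the non-degeneracy statement and also confirms that $\tau=1/D(v)\nearrow 1/a_0$ as $v\searrow 0$.

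I do not anticipate a genuine obstacle here: the content is that a polynomial bounded below by a positive constant has a strictly increasing antiderivative, hence an invertible primitive. The two points deserving a line of care are, first, the surjectivity of $v(\cdot)$ onto all of $[0,\infty)$ --- needed so that $D$ is defined at every admissible value of $v$ --- which relies on $\int_0^\infty P=\infty$ (immediate from $a_0>0$, or from the presence of any positive coefficient); and second, the word ``homogeneous'' in Assumption~\ref{D(v)}, which should be read in the weak sense that $F$ inherits the structural features of $G$ (non-negativity, monotonicity, vanishing at the origin) rather than as literal positive homogeneity, which $F=G\circ u$ generally fails (e.g.\ for $P(u)=a_0+a_1u$ one computes $D(v)=\sqrt{a_0^2+2a_1v}$, so $F(v)=\sqrt{a_0^2+2a_1v}-a_0$). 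Finally, to close the program of this subsection I would record the byproduct that under \eqref{u-to-v} one has $v_t=P(u)u_t$ and $\nabla v=P(u)\nabla u$, whence $\nabla\cdot(P(u)\nabla u)=\Delta v$ and $\Tilde Lu=0$ becomes $v_t-P(u)\Delta v=v_t-D(v)\Delta v=Lv=0$; that is, the transformation carries solutions of the divergence-form equation \eqref{conserv-dif_eq} to solutions of the non-divergence-form equation \eqref{non_diveregent_eq}.
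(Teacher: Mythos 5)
Your proposal is correct, and it is in fact more complete than the paper's own argument. The paper proves the theorem only by explicit construction in the first-order case $n=1$: it exhibits $F(v)=-a_0+\sqrt{a_0^{2}+2a_1 v}$, checks $D(v)=P(u)$ by direct substitution, and then simply remarks that ``for the general case, this construction is not explicit,'' leaning on the phrase ``monotonicity of integral'' in the theorem statement for the rest. You instead give the general argument: since $P(\xi)\ge a_0>0$ (in the non-degenerate setting the paper works in), the primitive $v(u)=\int_0^u P(\xi)\,\rd\xi$ is a strictly increasing, smooth bijection of $[0,\infty)$ onto $[0,\infty)$, so $D(v):=P\bigl(v^{-1}(v)\bigr)$ and $F(v):=D(v)-a_0=G\bigl(v^{-1}(v)\bigr)$ are well defined, with $F$ non-negative, non-decreasing, and vanishing at $v=0$ --- exactly the structural properties Assumption~\ref{D(v)} needs --- and your worked example recovers the paper's explicit formula for $n=1$ as a special case. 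What your route buys is a proof valid for every admissible degree $n$ together with verification that $D$ really satisfies Assumption~\ref{D(v)}; what the paper's route buys is only the closed-form expression in the linear-diffusivity case, which is useful for computation but does not by itself establish the general claim. Two of your side remarks are also well taken: surjectivity of $v(\cdot)$ onto $[0,\infty)$ does need the observation $\int_0^\infty P=\infty$, and the word ``homogeneous'' in Assumption~\ref{D(v)} cannot be read literally, since $F=G\circ v^{-1}$ is generally not positively homogeneous. One small caution: the theorem as stated allows $a_0=0$ (only some coefficient strictly positive); your monotonicity-plus-coercivity argument still yields a strictly increasing bijection there, but the inverse loses smoothness at $v=0$ and the equation becomes degenerate, so it is worth stating, as you implicitly do, that you work under the paper's standing assumption $a_0=P(0)>0$. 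Your closing ``byproduct'' paragraph reproduces the content of the paper's subsequent lemma ($Lv=P(u)\Tilde{L}u$) rather than part of this theorem, which is harmless but redundant here.
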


\begin{proof}
For first-order polynomials, the construction of the function $F(v)$ is explicit. Indeed for $n=1$, $P(u)=a_0+a_1 u$.
Let
\begin{equation}\label{Rahnuma}
    F(v)=-a_0+\sqrt{a_0^{2}+2 a_1 v}.
\end{equation}
Then, a direct substitution shows that:
$$D(v)=P(u).$$
\end{proof}

For the general case, this construction is not explicit. 

Next, we show that if $u$ and $v$ are related by the transition formula~\eqref{u-to-v}, and $D(v)$ satisfies Eq.~\eqref{P-D}, then the left-hand side of Eq.~(\ref{non_diveregent_eq}) takes the form:
\begin{equation}\label{non_diveregent_eq2}
    Lv: \frac{\partial v}{\partial t}- P(u) \Delta v .
\end{equation}
Observe that both functions $u$ and $v$ are involved in Eq.~\eqref{non_diveregent_eq2}.

Denote by $\mathcal{C}^{2,1}$ the class of function that have continuous second derivatives w.r.t.\ $x$ and continuous first derivatives w.r.t.\ $t$. Then, we prove:
\begin{lemma} 
If $u, v \in \mathcal{C}^{2,1}$, then the transition formula \eqref{u-to-v} implies that
\begin{equation}\label{v-u-rel}
Lv = \frac{\partial v}{\partial t} - D(v) \Delta v = \frac{\partial v}{\partial t}  - P(u) \Delta v = P(u) \Tilde{L}u ,
\end{equation}
where $\Tilde{L}u$ is defined in \eqref{conserv-dif_eq}.
\end{lemma}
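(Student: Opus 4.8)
The proof is a direct computation via the chain rule, so the plan is simply to differentiate the transition formula \eqref{u-to-v} and then collect terms. First I would observe that, since $P$ is a polynomial (hence $C^\infty$) and $u\in\mathcal{C}^{2,1}$, the composite $v=\int_0^u P(\xi)\,\rd\xi$ is automatically of class $\mathcal{C}^{2,1}$ as well; thus every derivative written below exists in the classical sense and the chain rule applies pointwise. The hypothesis $u,v\in\mathcal{C}^{2,1}$ in the statement is there precisely to license these manipulations.

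Next, by the fundamental theorem of calculus together with the chain rule, $\partial v/\partial t = P(u)\,\partial u/\partial t$, and, for each spatial coordinate $x_i$, $\partial v/\partial x_i = P(u)\,\partial u/\partial x_i$, i.e.\ $\nabla v = P(u)\,\nabla u$. Taking the divergence of the last identity gives $\Delta v = \nabla\cdot(\nabla v) = \nabla\cdot\bigl(P(u)\nabla u\bigr)$. I would then substitute these into $Lv = \partial v/\partial t - P(u)\,\Delta v$ to obtain $Lv = P(u)\,\partial u/\partial t - P(u)\,\nabla\cdot\bigl(P(u)\nabla u\bigr) = P(u)\bigl[\partial u/\partial t - \nabla\cdot(P(u)\nabla u)\bigr] = P(u)\,\Tilde{L}u$, using the definition \eqref{conserv-dif_eq} of $\Tilde{L}$; this is the rightmost equality in \eqref{v-u-rel}.

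Finally, the middle equality $D(v)\,\Delta v = P(u)\,\Delta v$ is immediate from Theorem~\ref{u-to-v-thm}, which guarantees $P(u)=D(v)$ whenever $u$ and $v$ are linked by \eqref{u-to-v}. Combining this with the computation above yields the full chain of equalities \eqref{v-u-rel}.

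There is no genuine obstacle here; the computation is routine. The only points deserving a word of care are (i) noting that $v$ inherits the $\mathcal{C}^{2,1}$ regularity needed to commute the divergence with the product $P(u)\nabla u$, so that $\Delta v = \nabla\cdot(P(u)\nabla u)$ holds classically, and (ii) invoking Theorem~\ref{u-to-v-thm} for the $D(v)$-versus-$P(u)$ step rather than re-deriving the function $F$. One may also remark that, since $P(u)>0$, the map \eqref{u-to-v} is strictly increasing in $u$ and hence invertible, so $u$ is a well-defined function of $v$ and \eqref{v-u-rel} can be read in either direction — a fact that will be used later when transferring maximum-principle estimates between the divergence and non-divergence forms.
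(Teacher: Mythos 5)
Your proof is correct and follows essentially the same route as the paper: both are direct chain-rule computations from $v=\int_0^u P(\xi)\,\rd\xi$, the only cosmetic difference being that you keep $\Delta v=\nabla\cdot\bigl(P(u)\nabla u\bigr)$ in divergence form, whereas the paper expands it as $\psi''(u)|\nabla u|^2+\psi'(u)\Delta u$ and then refactors to reach $\psi'(u)\Tilde{L}u$. Your version is, if anything, slightly cleaner, and your appeal to Theorem~\ref{u-to-v-thm} for the $D(v)=P(u)$ step matches the paper's implicit use of it.
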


\begin{proof} 
Let $v= \psi (u)$, where $\psi (u)= \int_{0}^{u} P(\xi) \,\rd\xi$ and so, $P(u)= \psi'(u)$. Then, we compute:
\begin{equation*}
    \begin{split}
    Lv = \frac{\partial v}{\partial t} - P(u) \Delta v & = \psi '(u) \frac{\partial u}{\partial t} - P(u) \psi^{''}(u) |\nabla u|^2 - P(u) \psi'(u) \Delta u \\
    & = \psi'(u) \bigg[ \frac{\partial u}{\partial t}  - P(u)\frac{\psi^{''}(u)}{\psi'(u)}|\nabla u|^2- P(u) \Delta u \bigg]\\
    &=\psi'(u) \Tilde{L}u .
    \end{split}
\end{equation*}%
\end{proof}

\begin{corollary}
Since $\psi'(u)>0$, if $u$ is a solution of the equation $\Tilde{L}u=0$, then $v$ is solution of the equation $Lv=0$.
\end{corollary}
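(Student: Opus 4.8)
The plan is to obtain the Corollary as a direct consequence of the preceding Lemma, once it has been checked that $v$ is an admissible object to insert into it. First I would note that, since $P$ is the polynomial in Eq.~\eqref{eq:P(u)}, the antiderivative $\psi(u)=\int_0^u P(\xi)\,\rd\xi$ appearing in the transition formula~\eqref{u-to-v} is itself a polynomial, hence $C^\infty$, in $u$. Consequently, whenever $u\in\mathcal{C}^{2,1}$, the composition $v=\psi(u)$ lies in $\mathcal{C}^{2,1}$ as well, with $\nabla v=P(u)\nabla u$ and $\partial_t v=P(u)\,\partial_t u$; this is precisely the regularity hypothesis under which the Lemma was established.

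Second, I would invoke the Lemma verbatim: the transition formula~\eqref{u-to-v} yields the pointwise identity $Lv=\frac{\partial v}{\partial t}-D(v)\Delta v=\psi'(u)\,\Tilde{L}u=P(u)\,\Tilde{L}u$ on $(-\infty,\infty)\times(0,\infty)$. By Assumption~\ref{incr-assump-cond}, $P(u)=a_0+G(u)$ with $a_0>0$ and $G(u)\ge0$ for $u\ge0$, so $\psi'(u)=P(u)>0$ everywhere along the solution. Therefore, if $u$ solves $\Tilde{L}u=0$, then $Lv=P(u)\cdot 0\equiv 0$, i.e., $v$ solves the non-divergence-form equation~\eqref{non_diveregent_eq}. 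I would also point out that the strict positivity $\psi'(u)>0$ makes this implication reversible: $\psi$ is strictly increasing with $\psi(0)=0$, hence a bijection of $[0,\infty)$ onto $[0,\psi(\infty))$, and $\Tilde{L}u=P(u)^{-1}Lv$, so $Lv=0$ forces $\Tilde{L}u=0$ as well.

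Third --- and this is the only step that requires more than bookkeeping --- I would verify that $v$ also inherits the auxiliary conditions that make it a genuine solution of the problem posed in \S2, not merely of the PDE. Non-negativity is immediate, since $v=\int_0^u P(\xi)\,\rd\xi\ge0$ whenever $u\ge0$; the initial datum transforms as $v(x,0)=\psi(u_0(x))$; and the decay condition~\eqref{eq:decay_bc} is preserved because $P(0)=a_0$ gives $v=a_0 u+O(u^2)$ as $u\to0$, so $v(x,t)|x|^\gamma\sim a_0\,u(x,t)|x|^\gamma\to0$ as $|x|\to\infty$. I do not anticipate a genuine obstacle: the entire mathematical content of the Corollary is carried by the algebraic factorization $Lv=P(u)\,\Tilde{L}u$ from the Lemma, and the hypothesis ``$\psi'(u)>0$'' serves only to guarantee that this factor never vanishes, which is what makes the factorization both non-degenerate and invertible.
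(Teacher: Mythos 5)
Your proposal is correct and follows the same route the paper intends: the Corollary is an immediate consequence of the Lemma's identity $Lv=\psi'(u)\,\Tilde{L}u$ with $\psi'(u)=P(u)>0$, so $\Tilde{L}u=0$ gives $Lv=0$ pointwise. Your additional remarks (invertibility of the implication and preservation of the auxiliary conditions) go beyond what the Corollary asserts but are harmless and consistent with the paper's setup.
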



\section{Maximum principle on a bounded domain}
\label{sec:max_bd}

In this section, we prove a maximum principle for the solution, following \citet{Ilyin02} \citep[see also][]{Landis}. Let $\mathbb{R}^{d}$ be the $d$-dimensional real Euclidean space. We also consider the $(d+1)$-dimensional space $\mathbb{R}^{d+1}$, in which the spatial coordinates are augmented by time: $(x,t)= (x_1, x_2,\hdots,x_d,t)$. 
Now, suppose that $U \subset \mathbb{R}^{d}$ is bounded, and $t>0$. We define the cylindrical region  $\Omega= U \times (0,T] \subset \mathbb{R}^{d+1}$, and its parabolic boundary $\Gamma = (U \times \{t=0\})\cup (\partial U \times (0,T])$. We also consider the layer $H\subset \mathbb{R}^{d+1}\cap(0,T]$. 

Again, let $\mathcal{C}^{2,1}(\Omega)$ be the class of continuous functions in $\bar{\Omega}$  that have two continuous derivatives in $x$ and one continuous derivative in $t$ inside the domain $\Omega$. Henceforth, both functions $u(x,t)$ and $v(x,t)$ are assumed to be in $\mathcal{C}^{2,1}(\Omega)$.

\begin{lemma}\label{maximum-principle_weak} 
For a given function $u \in \mathcal{C}^{2,1}$, if $P(u) \ge a_0 > 0$ and $Lv > 0$ in $\Omega$, then $v(x,t) \geq \min_{\Gamma(\Omega)} v  $ in $\Omega$.
\end{lemma}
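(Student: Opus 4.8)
The plan is to establish this weak maximum principle by a standard contradiction argument adapted to the quasilinear operator $Lv = \partial_t v - P(u)\Delta v$. The key observation is that, although $P(u)$ depends on the auxiliary function $u$, for the purposes of this lemma $u$ is a fixed given function, so the coefficient $P(u(x,t))$ is simply a prescribed coefficient which, by hypothesis, satisfies $P(u) \ge a_0 > 0$ throughout $\Omega$. Hence $L$ behaves as a linear uniformly parabolic operator (with possibly non-smooth but bounded-below coefficient), and the classical interior-extremum obstruction applies.

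First I would suppose, for contradiction, that $v$ attains a value strictly below $m := \min_{\Gamma} v$ somewhere in $\Omega$; since $v \in \mathcal{C}^{2,1}(\bar\Omega)$ and $\bar\Omega$ is compact, $v$ attains its global minimum over $\bar\Omega$ at some point $(x_0,t_0)$, and by assumption $(x_0,t_0) \in \Omega \setminus \Gamma$, i.e.\ $x_0 \in U$ (interior) and $0 < t_0 \le T$. Next I would extract the first- and second-order information at this interior-in-space minimum: $\nabla v(x_0,t_0) = 0$ and the Hessian $D^2 v(x_0,t_0)$ is positive semidefinite, hence $\Delta v(x_0,t_0) \ge 0$. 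For the time derivative I would distinguish the cases $t_0 < T$ and $t_0 = T$: if $t_0 < T$ then $\partial_t v(x_0,t_0) = 0$; if $t_0 = T$ then, since $v(x_0,t)$ is minimized at the right endpoint $t = T$ along the segment $\{x_0\}\times(0,T]$, we get $\partial_t v(x_0,T) \le 0$. In either case $\partial_t v(x_0,t_0) \le 0$.

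Combining these, at $(x_0,t_0)$ we would obtain
\begin{equation*}
Lv(x_0,t_0) = \partial_t v(x_0,t_0) - P(u(x_0,t_0))\,\Delta v(x_0,t_0) \le 0 - a_0 \cdot 0 = 0,
\end{equation*}
using $\partial_t v \le 0$, $P(u) \ge a_0 > 0$, and $\Delta v \ge 0$. This contradicts the hypothesis $Lv > 0$ in $\Omega$. Therefore no such interior point exists, and $v(x,t) \ge \min_{\Gamma} v$ for all $(x,t) \in \Omega$.

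The main obstacle — really the only delicate point — is the treatment of the final time slice $t_0 = T$, since $\Omega = U \times (0,T]$ includes $t = T$ but the parabolic boundary $\Gamma$ does not; one must argue carefully that a minimum at $t = T$ still forces $\partial_t v \le 0$ (a one-sided derivative argument), so that the sign of $Lv$ is still obstructed there. A secondary point worth a sentence is the regularity of $P(u)$: since $u \in \mathcal{C}^{2,1}$ and $P$ is a polynomial, $P(u)$ is continuous, so the value $P(u(x_0,t_0))$ is well-defined and the bound $P(u) \ge a_0$ is used pointwise — no smoothness of the coefficient is needed for this soft argument. (A standard refinement, if a strict/strong version were wanted, would replace $Lv > 0$ by $Lv \ge 0$ and perturb $v$ by $\varepsilon e^{\lambda t}$ or $\varepsilon(|x|^2 + Kt)$; but for the stated non-strict conclusion under the strict hypothesis $Lv>0$, the bare contradiction argument above suffices.)
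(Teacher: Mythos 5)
Your proof is correct and follows essentially the same route as the paper's: a contradiction argument at an interior minimum point, using $\nabla v = 0$, $\Delta v \ge 0$, $\partial_t v \le 0$ and $P(u) \ge a_0 > 0$ to force $Lv \le 0$ against the hypothesis $Lv > 0$. Your explicit treatment of the $t_0 = T$ case (one-sided time derivative) is a detail the paper glosses over but does not change the argument.
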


\begin{proof} 
Suppose there is a point $(x_0, t_0) \in \Omega$ such that $\min_{\bar{\Omega}}v=v(x_0, t_0)< \min_{\Gamma(\Omega)} v$, then the minimum of $v(x, t)$ attained at $(x_0, t_0) \in \Omega$, for $x_0 \in U$ and $t_0 \leq T$.
Therefore, at the point $(x_0, t_0)$, $\nabla v= 0$, $\frac{\partial v}{\partial t} \leq 0$ and $\Delta v \geq 0$.

Consequently, $Lv \leq 0$ at the point $(x_0, t_0)$, which is a contradiction. 
\end{proof}

\begin{lemma}\label{maximum_principle_strong}  
For a given function $u \in \mathcal{C}^{2,1}$, if $P(u) \ge a_0 > 0$ $\forall u$ and $Lv \geq 0$ in $\Omega$, then $v \geq \min_{\Gamma(\Omega)} v$ in $\Omega$.
\end{lemma}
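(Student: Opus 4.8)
The plan is to deduce the strong maximum principle (Lemma~\ref{maximum_principle_strong}, with only $Lv \ge 0$) from the weak one (Lemma~\ref{maximum-principle_weak}, with the strict inequality $Lv > 0$) by the classical perturbation trick: add a small auxiliary function with a favorable sign under the operator $L$, apply Lemma~\ref{maximum-principle_weak} to the perturbed function, and then let the perturbation parameter tend to zero. First I would fix $\varepsilon > 0$ and set $w_\varepsilon(x,t) = v(x,t) + \varepsilon t$ (or, if one prefers a bounded perturbation on the bounded set $U$, $w_\varepsilon(x,t) = v(x,t) - \varepsilon(R^2 - |x|^2)$ for $R$ large enough that $U \subset B_R$; the linear-in-$t$ choice is cleaner here). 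With $w_\varepsilon = v + \varepsilon t$ one computes, using linearity of the differential operator $L$ in its highest-order action, $L w_\varepsilon = \partial_t(v + \varepsilon t) - P(u)\Delta(v+\varepsilon t) = Lv + \varepsilon - \varepsilon P(u)\Delta t = Lv + \varepsilon$, since $\Delta t = 0$. Because $Lv \ge 0$ in $\Omega$, we get $L w_\varepsilon \ge \varepsilon > 0$ in $\Omega$, so $w_\varepsilon$ satisfies the hypothesis of Lemma~\ref{maximum-principle_weak} (the assumption $P(u) \ge a_0 > 0$ is inherited unchanged).

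Next I would apply Lemma~\ref{maximum-principle_weak} to $w_\varepsilon$ to conclude $w_\varepsilon(x,t) \ge \min_{\Gamma(\Omega)} w_\varepsilon$ for all $(x,t)\in\Omega$. Then I would estimate the right-hand side: since $t \ge 0$ on $\Gamma(\Omega)$, we have $w_\varepsilon = v + \varepsilon t \ge v$ there, hence $\min_{\Gamma(\Omega)} w_\varepsilon \ge \min_{\Gamma(\Omega)} v$. Combining, $v(x,t) + \varepsilon t \ge \min_{\Gamma(\Omega)} v$ for every $(x,t)\in\Omega$ and every $\varepsilon > 0$. Letting $\varepsilon \to 0^+$ (with $t \le T$ fixed and finite on $\bar\Omega$, so $\varepsilon t \to 0$) yields $v(x,t) \ge \min_{\Gamma(\Omega)} v$ in $\Omega$, which is the claim.

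I expect the routine parts (the computation of $L w_\varepsilon$, the boundary comparison) to be entirely mechanical; the only point requiring a little care is verifying that $w_\varepsilon$ genuinely lies in the regularity class $\mathcal{C}^{2,1}(\Omega)$ to which Lemma~\ref{maximum-principle_weak} applies — but $v \in \mathcal{C}^{2,1}(\Omega)$ by standing assumption and $\varepsilon t$ is smooth, so $w_\varepsilon \in \mathcal{C}^{2,1}(\Omega)$ and continuous up to $\bar\Omega$, and the minima over the compact set $\Gamma(\Omega)$ are attained. A secondary subtlety is that Lemma~\ref{maximum-principle_weak} was stated for the \emph{same} fixed function $u$ appearing in $P(u)$; since we do not alter $u$, only $v$, the hypothesis $P(u) \ge a_0 > 0$ transfers verbatim, so there is no real obstacle. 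If one instead wanted to avoid any dependence on $T$ being finite, the spatial perturbation $-\varepsilon(R^2 - |x|^2)$ works identically, using $\Delta(R^2 - |x|^2) = -2d$ so that $L w_\varepsilon = Lv + 2d\,\varepsilon P(u) \ge Lv + 2d a_0 \varepsilon > 0$, and $R^2 - |x|^2$ is bounded on $U$; I would mention this as a remark but carry out the argument with $v + \varepsilon t$.
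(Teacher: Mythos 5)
Your main argument is correct and is essentially the paper's own proof: the paper perturbs $v$ by $Kt$ (your $\varepsilon t$), gets $Lw = Lv + K > 0$, applies Lemma~\ref{maximum-principle_weak}, and lets $K \to 0$. One small caveat on your optional spatial-perturbation remark: with $w_\varepsilon = v - \varepsilon(R^2-|x|^2)$ you actually get $Lw_\varepsilon = Lv - 2d\,\varepsilon P(u)$, which has the wrong sign; you would need $w_\varepsilon = v + \varepsilon(R^2-|x|^2)$ (so that $\Delta w_\varepsilon = \Delta v - 2d\varepsilon$ and $Lw_\varepsilon \ge 2d a_0 \varepsilon > 0$), but since you carry out the proof with $v+\varepsilon t$ this does not affect your argument.
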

\begin{proof}
Consider the function
$w(x, t)= Kt+ v(x, t)$ for $(x, t) \in \Omega$ and  $t<T$, $K >0$. 
Then, $Lw= \frac{\partial w}{\partial t} - P(u) \Delta w= K+ \frac{\partial v}{\partial t} - P(u) \Delta v >0$.

By Lemma~\ref{maximum-principle_weak} and since $Lw 
> 0$, we obtain $w(x, t) \geq \min_{\Gamma(\Omega)} w$ in $\Omega$ for $K>0$. 
Thus, $K t + v(x,t) \geq \min_{\Gamma(\Omega)}v$ in $\Omega$, which implies $K T + v(x,t) \geq \min_{\Gamma(\Omega)}v$ in $\Omega$ for $K$. Taking $K\to0$, $v\geq \min_{\Gamma(\Omega)} v$.
\end{proof}

From standard  maximum principle follows the comparison lemma:
\begin{lemma}\label{comparison_theorem}
If $P(u) \ge a_0 > 0$ $\forall u$  (Assumption~\ref{incr-assump-cond}) and $Lv_1=\frac{\partial v_1}{\partial t}-P(u) \Delta v_1 \geq Lv_2=\frac{\partial v_2}{\partial t}-P(u) \Delta v_2$ in $\Omega$, and $v_1\geq v_2$ on $\Gamma$, then $v_1 \geq v_2$ in $\Omega$.
\end{lemma}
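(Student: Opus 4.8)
The plan is to derive Lemma~\ref{comparison_theorem} from the (strong) minimum principle already established in Lemma~\ref{maximum_principle_strong} by applying it to the difference $w := v_1 - v_2$. First I would observe that the operator $L$ here is \emph{linear} in its argument, because the coefficient $P(u)$ depends only on the externally given function $u$ and not on the function to which $L$ is applied; hence $Lw = Lv_1 - Lv_2$. Since $w \in \mathcal{C}^{2,1}(\Omega)$ as a difference of two such functions, and by hypothesis $Lv_1 \ge Lv_2$ in $\Omega$, we get $Lw \ge 0$ in $\Omega$. The coefficient bound $P(u) \ge a_0 > 0$ required by Lemma~\ref{maximum_principle_strong} is exactly Assumption~\ref{incr-assump-cond}, which is in force.

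Next I would invoke Lemma~\ref{maximum_principle_strong} to conclude $w \ge \min_{\Gamma(\Omega)} w$ in $\Omega$. The remaining step is to use the boundary hypothesis $v_1 \ge v_2$ on $\Gamma$, i.e.\ $w \ge 0$ on $\Gamma$, which gives $\min_{\Gamma(\Omega)} w \ge 0$. Chaining these two inequalities yields $w \ge 0$ in $\Omega$, that is, $v_1 \ge v_2$ in $\Omega$, as claimed.

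There is essentially no serious obstacle: the lemma is a formal corollary, and the phrase ``from standard maximum principle follows'' in the text signals exactly this. The only point that deserves a sentence of care is the linearity remark — one must note that although the original equation $Lv = 0$ is nonlinear \emph{in $v$} (through the transition $v = \psi(u)$), the operator $L[\,\cdot\,] = \partial_t(\cdot) - P(u)\Delta(\cdot)$, with $u$ frozen, acts linearly, so that subtraction of the two differential inequalities is legitimate and no cross terms appear. I would also briefly note that $w$ inherits the $\mathcal{C}^{2,1}(\Omega)$ regularity needed to apply the earlier lemma. With those two remarks in place, the proof is three lines.
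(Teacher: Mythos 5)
Your proposal is correct and is exactly the paper's argument: the paper's proof likewise applies the maximum principle (Lemma~\ref{maximum_principle_strong}) to the difference $w = v_1 - v_2$, and you have simply filled in the routine details (linearity of $L$ with $u$ frozen, $Lw \ge 0$, and $w \ge 0$ on $\Gamma$ giving $w \ge 0$ in $\Omega$). No discrepancies to note.
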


\begin{proof}
It is sufficient to apply the maximum principle to the function $w(x,t) = v_1 (x,t)-v_2 (x,t)$ using the properties of the functions $v_1$ and $v_2$.
\end{proof}

\section{Maximum principle on an unbounded domain}
\label{sec:max_ubd}

Let $r=\big(\sum_{i=1}^{d}x_{i}^{2}\big)^{1/2}$. As in the previous section, we follow  \citet{Ilyin02} and take into account that the coefficients of the operator $L$ are given by $P(u)$.

\begin{lemma}
 Suppose the function $u(x, t)$ is continuous in $\Omega=\mathbb{R}^d\times [0,T]$ such that $ 0<a_0\le P(u) \leq \epsilon(r) r^2+ C$ with $\lim_{r \to \infty}\epsilon(r)=0$ and $v(x,t)>-m$, $m>0$. Under these assumptions, if $Lv\geq 0$ and  $v|_{t=0}>0$, then $v(x,t)\geq 0$ for all $(x,t)\in\Omega $.

\end{lemma}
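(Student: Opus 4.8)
The plan is to adapt the classical Phragmén–Lindelöf / Gidas-type argument for unbounded-domain maximum principles (as in \citet{Ilyin02}) to the present operator $L = \partial_t - P(u)\Delta$, exploiting that $P(u)$ is bounded above by $\epsilon(r) r^2 + C$ and below by $a_0 > 0$. The idea is to build an auxiliary comparison function that grows in $r$, stays positive, and is a strict supersolution of $L$ on large balls, so that the bounded-domain comparison principle (Lemma~\ref{comparison_theorem}) can be applied on a ball $B_R$ and then $R \to \infty$ is taken.

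Concretely, first I would fix $T' < T$ and an arbitrary point $(x^*, t^*) \in \Omega$, and seek a family of functions of the form
\begin{equation*}
    \Phi_\beta(x,t) = \beta\bigl(r^2 + K t\bigr)
\end{equation*}
for small $\beta > 0$ and a constant $K$ to be chosen. Computing $L\Phi_\beta = \beta K - P(u)\,\beta\,(2d)$, one sees that picking $K > 2d\,(\sup P(u))$ is not available globally since $P(u)$ is only controlled by $\epsilon(r)r^2 + C$; so instead I would use a function that grows faster than $r^2$, e.g. of the type used by Ilyin–Landis,
\begin{equation*}
    \Phi_\beta(x,t) = \beta\exp\!\bigl(\lambda t + \mu r^2\bigr)
\end{equation*}
or a polynomially corrected version, and verify that, because $P(u) \le \epsilon(r) r^2 + C$ with $\epsilon(r)\to 0$, one has $L\Phi_\beta \ge 0$ outside a sufficiently large ball once $\lambda, \mu$ are tuned. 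The key mechanism is that $\epsilon(r) r^2$ multiplied by the second derivatives of $\Phi_\beta$ is still dominated by the time-derivative term on $\{r \ge R_0\}$ for $R_0$ large, precisely because $\epsilon(r) \to 0$.

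Then I would run the comparison argument: on the cylinder $B_R \times (0,T']$, the function $w := v + \Phi_\beta$ satisfies $Lw \ge 0$, and on the parabolic boundary $w \ge 0$ — on the bottom because $v|_{t=0} > 0$ and $\Phi_\beta \ge 0$, and on the lateral boundary $\{r = R\}$ because $\Phi_\beta \ge \beta \exp(\mu R^2)$ eventually exceeds $m$ (using $v > -m$), for $R$ large depending on $\beta$. By Lemma~\ref{maximum_principle_strong} (applied to $w$, whose coefficient $P(u)$ still satisfies $P(u)\ge a_0>0$), $w \ge 0$ on $B_R \times (0,T']$, hence $v(x^*,t^*) \ge -\Phi_\beta(x^*,t^*)$. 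Letting first $R \to \infty$ and then $\beta \to 0$ with $(x^*,t^*)$ fixed gives $v(x^*,t^*) \ge 0$; since $(x^*,t^*)$ and $T' < T$ were arbitrary, $v \ge 0$ on all of $\Omega$.

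The main obstacle I anticipate is the construction and verification of the barrier function $\Phi_\beta$: one must choose its growth rate fast enough that it dominates the lower bound $-m$ on ever-larger spheres (forcing super-quadratic, indeed exponential, growth in $r$), yet the operator $L$ applied to such a rapidly growing function produces a term $-P(u)\Delta\Phi_\beta$ that could be very negative since $P(u)$ itself may grow like $r^2$. Reconciling these requires carefully exploiting $\lim_{r\to\infty}\epsilon(r) = 0$: the product $\epsilon(r) r^2 \cdot (\text{second derivatives of }\Phi_\beta)$ must be shown to be out-competed by $\partial_t \Phi_\beta$ for $r$ beyond some threshold, which pins down the admissible relationship between the exponential parameters $\lambda$ and $\mu$ and the time horizon $T$. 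Handling the finite horizon (the exponential-in-$t$ factor must not blow up before $T$) and the interplay between the "large $R$" limit and the "small $\beta$" limit are the delicate bookkeeping points; everything else is a routine application of the bounded-domain results already proved.
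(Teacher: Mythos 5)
There is a genuine gap, and it sits exactly where you placed your confidence: the barrier $\Phi_\beta(x,t)=\beta\exp(\lambda t+\mu r^2)$ is \emph{not} a supersolution of $L$ for large $r$ under the hypothesis $P(u)\le \epsilon(r)r^2+C$ with only $\epsilon(r)\to 0$. Computing directly, $\Delta e^{\mu r^2}=(2\mu d+4\mu^2 r^2)e^{\mu r^2}$, so
\begin{equation*}
L\Phi_\beta=\Phi_\beta\left[\lambda-P(u)\bigl(2\mu d+4\mu^2 r^2\bigr)\right],
\end{equation*}
and the negative term is of size $\sim 4\mu^2\,\epsilon(r)\,r^4\,\Phi_\beta$, while the time derivative only supplies $\lambda\,\Phi_\beta$. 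Since $\epsilon(r)\to 0$ gives merely $\epsilon(r)r^4=\mathrm{o}(r^4)$ (think of $\epsilon(r)=1/\log r$), no choice of $\lambda,\mu$ makes $L\Phi_\beta\ge 0$ outside a large ball; even the classical Tychonov trick of a time-dependent exponent $\mu r^2/(1-\gamma t)$ only upgrades the time-derivative term to $O(r^2)\Phi_\beta$, still beaten by $\epsilon(r)r^4$ unless $\epsilon(r)=O(r^{-2})$, which is not assumed. So your ``key mechanism'' claim fails, and the comparison step collapses with it.

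The fix is also the reason your first, discarded ansatz was closer to the truth: because $v>-m$ is a \emph{uniform} lower bound, you do not need super-quadratic spatial growth at all — you only need the barrier to reach the value $m$ on the lateral boundary $r=r_0$, which a quadratic does. What the quadratic lacks (as you correctly noted, constant $K$ cannot beat an unbounded $P(u)$) is restored by multiplying by $e^{\alpha t}$: the paper takes $w_{\mathrm{aux}}(x,t)=\frac{m}{r_0^2}\,(r^2+Kt)\,e^{\alpha t}$, for which
\begin{equation*}
L\,w_{\mathrm{aux}}=\frac{m}{r_0^2}e^{\alpha t}\bigl(K+\alpha r^2+K\alpha t-2d\,P(u)\bigr),
\end{equation*}
and here the Laplacian of the quadratic is a \emph{constant}, so $2dP(u)\le 2d(\epsilon(r)r^2+C)$ is dominated by $\alpha r^2$ for large $r$ (using $\epsilon(r)\to 0$) and by $K$ for bounded $r$. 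The prefactor $m/r_0^2$ makes $w_{\mathrm{aux}}\ge m$ on $r=r_0$ (so $w_{\mathrm{aux}}+v\ge 0$ on the parabolic boundary, using $v|_{t=0}>0$ and $v>-m$), and the bounded-domain principle plus $r_0\to\infty$ at a fixed point finishes the proof, since $m(r^2+Kt)e^{\alpha t}/r_0^2\to 0$. Your overall comparison scaffolding (apply the bounded-domain lemma on cylinders, then pass to the limit) is fine; it is the growth rate and structure of the barrier — quadratic in $r$, exponential in $t$, scaled by $r_0^{-2}$ — that has to be corrected.
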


\begin{proof}
Consider, the auxiliary function $w(x,t)$, s.t.
\begin{equation*}
    w(x,t)=\frac{m}{r_0 ^2}\left(r^2+Kt\right)e^{\alpha t}+ v(x,t) ,
\end{equation*}
on the auxiliary domain consisting of the cylinder $Q_{r_0}= \{(r,t) \,|\, r \leq r_0, 0\leq t \leq T\}$, where the constants $K>0$ and $\alpha >0$ $\forall r_0>0$.
Indeed,
\begin{equation*}
    Lw = \frac{m}{r_{0}^{2}}e^{\alpha t} \big(K+ \alpha r^2 +K \alpha t - 2d P(u)\big)+Lv .
\end{equation*}

Since $Lv \geq 0$ and $P(u) = \epsilon(r) r^2+ C$, then for $r \geq 1$, we 
\begin{equation*}
    Lw \geq \frac{m}{r_{0}^{2}}e^{\alpha t} \left(K+ K \alpha t +\alpha r^2 - 2d \cdot \mathrm{o}(r^2+1)\right)>0 .
\end{equation*}
On the other hand, for $r<1$, $P(u)\leq M$ for some $M$. It follows that, if $\alpha> \frac{2d M}{r^2}$, then
\begin{equation*}
    Lw \geq \frac{m}{r_{0}^{2}}e^{\alpha t} (K + K \alpha t +\alpha r^2 - 2d M\big)>0 .
\end{equation*}

Consider now $w(x,t)$ in $Q_{r_0}$. For $t=0$, $w(x,0)\geq v(x,0)$, and for $r=r_0$, $w(x,t) \geq m+v\geq 0$. Then, according to Lemma~\ref{maximum-principle_weak}, the inequality $w(x,t) \geq 0$ holds everywhere in $\Omega$, and the result follows.

Observe that any particular point from $\mathbb{R}^{d+1}\cap\left(0<t\leq T\right)$ is contained in $Q_{r_0}$ for a sufficiently large $r_0$. Now, we have $w(x,t)=\frac{m}{r_0 ^2}(r^2+Kt)e^{\alpha t}+ v(x,t) \geq 0$. 
Taking the limit as $r_0 \to \infty$, $v(x,t) \geq 0$ $\forall(x,t)\in\Omega$ as desired.
\end{proof}

Similarly, one can prove:
\begin{lemma}\label{lemma-negative}
Suppose the function $u(x, t)$ is such that $ 0<a_0\le P(u) \leq \epsilon(r) r^2+ C$ with $\lim_{r \to \infty}\epsilon(r)=0$. 
Let $v\in \mathcal{C}^{2,1}$ and $L v \leq 0$,
then  $v(x,t)\leq \max v(x,0)$ 
for all $(x,t)\in\Omega $.
\end{lemma}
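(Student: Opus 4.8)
The plan is to mirror the proof of the preceding lemma almost verbatim, replacing the lower barrier by an upper one and flipping all inequalities. Concretely, I would introduce the auxiliary function
\begin{equation*}
  w(x,t)=\frac{M_0}{r_0^{2}}\left(r^{2}+Kt\right)e^{\alpha t} - \big(v(x,t)-\max v(x,0)\big),
\end{equation*}
on the cylinder $Q_{r_0}=\{(r,t)\,|\,r\le r_0,\ 0\le t\le T\}$, where $M_0>0$ is chosen so that $v(x,t)\ge \max v(x,0)-M_0$ holds on $\bar\Omega$ (this plays the role of the bound ``$v>-m$'' in the previous lemma, and should be stated as a hypothesis or justified from the growth/decay condition~\eqref{eq:decay_bc}), and $K,\alpha>0$ are constants to be fixed. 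Note that $\tilde v:=v-\max v(x,0)$ satisfies $L\tilde v = Lv\le 0$, since $\max v(x,0)$ is a constant and $L$ annihilates constants.

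The key steps, in order: first compute $Lw$. Using $L(r^{2}e^{\alpha t})$-type terms exactly as in the previous proof, one gets $Lw = \frac{M_0}{r_0^{2}}e^{\alpha t}\big(K+\alpha r^{2}+K\alpha t - 2d\,P(u)\big) - L\tilde v$. Since $L\tilde v = Lv\le 0$, the subtracted term is $\ge 0$, so $Lw \ge \frac{M_0}{r_0^{2}}e^{\alpha t}\big(K+\alpha r^{2}+K\alpha t - 2d\,P(u)\big)$. Second, split into $r\ge 1$ and $r<1$ exactly as before: for $r\ge 1$ use $P(u)\le \epsilon(r)r^{2}+C = \mathrm{o}(r^{2}+1)$ so that $\alpha r^{2}$ dominates $2d\,P(u)$ for $r$ large (and choosing $\alpha$ large handles intermediate $r$); for $r<1$ use $P(u)\le M$ and pick $\alpha>2dM$. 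This yields $Lw>0$ in $Q_{r_0}$. Third, check boundary data on the parabolic boundary $\Gamma(Q_{r_0})$: at $t=0$, $w(x,0)=\frac{M_0}{r_0^{2}}r^{2} - (v(x,0)-\max v(x,0)) \ge \frac{M_0}{r_0^{2}}r^{2}\ge 0$; at $r=r_0$, $w = \frac{M_0}{r_0^{2}}(r_0^{2}+Kt)e^{\alpha t} - \tilde v \ge M_0 - \tilde v \ge 0$ by the choice of $M_0$. Fourth, apply Lemma~\ref{maximum-principle_weak} (with $Lw>0$) on $Q_{r_0}$ to conclude $w\ge 0$ throughout $Q_{r_0}$. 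Fifth, fix an arbitrary $(x,t)\in\Omega$; it lies in $Q_{r_0}$ for all sufficiently large $r_0$, and letting $r_0\to\infty$ in $w(x,t)\ge 0$ kills the first term, leaving $\tilde v(x,t)\le 0$, i.e.\ $v(x,t)\le \max v(x,0)$.

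The main obstacle is not the computation—which is routine and identical in structure to the $Lv\ge 0$ case—but rather making the hypotheses airtight: one needs a two-sided control on $v$ (the constant $M_0$ with $v\ge \max v(x,0)-M_0$ on $\bar\Omega$) to guarantee the lateral boundary inequality, analogous to ``$v>-m$'' in the previous lemma. I would either state this explicitly as part of the lemma's hypotheses (the ``similarly'' in the paper's phrasing suggests the authors intend the reader to supply the obvious symmetric assumption) or deduce it from the decay condition~\eqref{eq:decay_bc}, which forces $v$ to be bounded. A secondary subtlety is the standard one in these Phragmén--Lindelöf-type arguments: the constants $K,\alpha$ must be chosen uniformly in $r_0$ (they depend only on $d$, $C$, the function $\epsilon$, and the bound $M$ on $P(u)$ near the origin), so that the final limit $r_0\to\infty$ is legitimate; I would make this uniformity explicit when fixing the constants.
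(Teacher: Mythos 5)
Your strategy is exactly the one the paper intends: the paper gives no separate proof of Lemma~\ref{lemma-negative}, dismissing it with ``similarly,'' and the intended argument is precisely your mirrored barrier $\frac{M_0}{r_0^2}(r^2+Kt)e^{\alpha t}$ applied to $v-\max v(x,0)$, followed by Lemma~\ref{maximum-principle_weak} on $Q_{r_0}$ and the limit $r_0\to\infty$. Your observations that the auxiliary boundedness hypothesis (the analogue of ``$v>-m$'') is silently omitted in the paper's statement, and that $K,\alpha$ must be chosen uniformly in $r_0$, are both correct and worth making explicit.

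One slip needs fixing: the bound you impose on $v$ points the wrong way. You require $v\ge \max v(x,0)-M_0$, but what your lateral-boundary check actually uses is $\tilde v = v-\max v(x,0)\le M_0$ at $r=r_0$, i.e.\ an \emph{upper} bound $v\le \max v(x,0)+M_0$ on $\bar\Omega$. With the lower bound as stated, the inequality ``$M_0-\tilde v\ge 0$ by the choice of $M_0$'' does not follow and the application of Lemma~\ref{maximum-principle_weak} on $Q_{r_0}$ is not justified. The repair is immediate — assume $v$ bounded above (the symmetric counterpart of $v>-m$ in the preceding lemma, or deduce it from the decay condition~\eqref{eq:decay_bc}) and take $M_0\ge \sup_{\bar\Omega} v-\max v(x,0)$ — after which the rest of your argument goes through verbatim.
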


Now, from Theorem~\ref{u-to-v-thm} and Lemma~\ref{lemma-negative}, it follows:
\begin{corollary}
Assume $P(u)$ is bounded and $\tilde{L}u=0$, then $v=\int_0^u P(\xi) \,\rd\xi \leq \int_0^{\max u(x,0)}P(\xi)\,\rd\xi$.
\end{corollary}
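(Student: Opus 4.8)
The plan is to reduce the claim to Lemma~\ref{lemma-negative} by passing from $u$ to $v$ through the transition formula~\eqref{u-to-v}. First I would observe that, since $\tilde L u = 0$ and $u,v \in \mathcal{C}^{2,1}$, the identity proved in the lemma preceding the Corollary gives $Lv = P(u)\tilde L u = 0$, so in particular $Lv \le 0$. Next I would check that the hypotheses on the coefficient $P(u)$ required by Lemma~\ref{lemma-negative} hold: by Assumption~\ref{incr-assump-cond} we have $P(u) \ge a_0 > 0$, and the standing hypothesis that $P(u)$ is bounded, say $P(u) \le M$, gives the upper bound $P(u) \le \epsilon(r) r^2 + C$ trivially with $\epsilon \equiv 0$ and $C = M$ (so $\lim_{r\to\infty}\epsilon(r)=0$ is satisfied vacuously). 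Hence Lemma~\ref{lemma-negative} applies to $v$ and yields $v(x,t) \le \max_x v(x,0)$ for all $(x,t) \in \Omega$.

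It then remains to identify $\max_x v(x,0)$ with $\int_0^{\max_x u(x,0)} P(\xi)\,\rd\xi$. Here I would use that $\xi \mapsto P(\xi)$ is strictly positive on $[0,\infty)$ (all coefficients non-negative, $a_0 > 0$), so the map $s \mapsto \int_0^{s} P(\xi)\,\rd\xi$ is strictly increasing; consequently, applying it to the pointwise inequality $u(x,0) \le \max_x u(x,0)$ and taking the supremum over $x$ gives $\max_x v(x,0) = \max_x \int_0^{u(x,0)} P(\xi)\,\rd\xi = \int_0^{\max_x u(x,0)} P(\xi)\,\rd\xi$. Combining this with the estimate from the previous paragraph gives exactly $v(x,t) = \int_0^{u(x,t)} P(\xi)\,\rd\xi \le \int_0^{\max_x u(x,0)} P(\xi)\,\rd\xi$, which is the assertion.

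I expect the main technical point to be the regularity bookkeeping that makes the chain $Lv = P(u)\tilde L u$ legitimate — namely that $v$ inherits membership in $\mathcal{C}^{2,1}$ from $u$ through the polynomial change of variables (this is immediate since $\psi(u) = \int_0^u P(\xi)\,\rd\xi$ is a polynomial, hence smooth, in $u$) — together with the implicit assumption that $u_0 = u(\cdot,0)$ is bounded so that $\max_x u(x,0)$ is finite and attained (or, if only a supremum exists, the same argument runs with $\sup$ in place of $\max$ throughout). Everything else is a direct application of the already-established comparison/maximum-principle machinery.
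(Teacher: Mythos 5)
Your proposal is correct and follows essentially the same route the paper intends: the paper deduces the corollary directly from the mapping result ($Lv = P(u)\tilde{L}u = 0$, so Lemma~\ref{lemma-negative} applies to $v$) together with the monotonicity of $s\mapsto\int_0^{s}P(\xi)\,\rd\xi$ from Theorem~\ref{u-to-v-thm}, which is exactly your argument. Your additional remarks on the trivial choice $\epsilon\equiv 0$, $C=M$ and on the regularity of $v=\psi(u)$ are correct bookkeeping but not a departure from the paper's approach.
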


However, we would like to make the bound on $v$ more precise. First we prove:

\begin{lemma} 
Let $w(x,t)= C F_{s,\beta}(x,t)$ for $t\geq 0$, where the constant $C>0$ and
\begin{equation}
\label{eq:barrier}
 F_{s,\beta} (x,t) = (t+1)^{-s} e^{-\frac{|x|^2}{4 \beta (t+1)}}
\end{equation}
is the barrier function. If  $0 < a_0\le P(u) \leq \beta$ and $s\leq\frac{ a_0d}{2\beta}$, then $Lw \geq 0$. 
\end{lemma}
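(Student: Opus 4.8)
The plan is to compute $Lw = \partial_t w - P(u)\,\Delta w$ directly by differentiating the barrier function and then checking that the result has a definite sign under the stated hypotheses. Since $w = C F_{s,\beta}$ with $C>0$, it suffices to show $L F_{s,\beta}\ge 0$. To streamline the bookkeeping I would set $\tau = t+1$, so that $F_{s,\beta} = \tau^{-s} e^{-|x|^2/(4\beta\tau)}$, and note that $F_{s,\beta}>0$ everywhere; hence only the bracketed factor multiplying $F_{s,\beta}$ will need to be signed.

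First I would record the derivatives. Differentiating in $t$ gives $\partial_t F_{s,\beta} = F_{s,\beta}\big(-s/\tau + |x|^2/(4\beta\tau^2)\big)$. For the spatial part, $\partial_{x_i} F_{s,\beta} = -F_{s,\beta}\,x_i/(2\beta\tau)$, and hence $\partial_{x_i}^2 F_{s,\beta} = F_{s,\beta}\big(x_i^2/(4\beta^2\tau^2) - 1/(2\beta\tau)\big)$, so that $\Delta F_{s,\beta} = F_{s,\beta}\big(|x|^2/(4\beta^2\tau^2) - d/(2\beta\tau)\big)$. Substituting into $L F_{s,\beta}$ and factoring out the positive quantity $F_{s,\beta}$, I would arrange the remaining bracket as
\[
-\frac{s}{\tau} + \frac{|x|^2}{4\beta\tau^2}\Big(1 - \frac{P(u)}{\beta}\Big) + \frac{P(u)\,d}{2\beta\tau}.
\]

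The last step is to sign the three groups. The middle term is nonnegative because $P(u)\le\beta$ gives $1 - P(u)/\beta \ge 0$ (while $|x|^2,\tau^2 > 0$). The first and third terms combine into $\tau^{-1}\big(P(u)d/(2\beta) - s\big)$, which is nonnegative because $P(u)\ge a_0$ yields $P(u)d/(2\beta) \ge a_0 d/(2\beta) \ge s$ by the hypothesis $s\le a_0 d/(2\beta)$. Hence the bracket is $\ge 0$, so $L F_{s,\beta}\ge 0$ and $Lw = C\,L F_{s,\beta}\ge 0$. There is no genuine obstacle here — the lemma is essentially a calculation — but the one point to be careful about is the split of the diffusion term: the particular form of the barrier makes $\partial_t F_{s,\beta} - P(u)\Delta F_{s,\beta}$ decompose into exactly one piece controlled by the upper bound $P(u)\le\beta$ and one piece controlled by the lower bound $P(u)\ge a_0$ together with the restriction on $s$, and it is worth emphasizing that neither bound on $P(u)$ alone suffices.
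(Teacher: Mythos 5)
Your proposal is correct and follows essentially the same route as the paper: a direct computation of $\partial_t F_{s,\beta}$ and $\Delta F_{s,\beta}$, factoring out the positive function $F_{s,\beta}$, and grouping the bracket into one term signed by $P(u)\le\beta$ and one signed by $P(u)\ge a_0$ together with $s\le \frac{a_0 d}{2\beta}$ — which is exactly the paper's decomposition $\frac{|x|^2}{4\beta^2(t+1)^2}\left(\beta-P(u)\right)+\frac{1}{2\beta(t+1)}\left(dP(u)-2s\beta\right)$. No gaps; your remark that both bounds on $P(u)$ are needed is the same observation the paper leaves implicit.
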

\begin{proof}
First, consider the function $F_{s,\beta}(x,t)$ with constant $s >0$, for $t\gg1$. 
Second, compute
\begin{equation*}
    \begin{split}
        Lw &= C \Bigg( -\frac{s}{t+1}+ \frac{|x|^2}{4 \beta (t+1)^2}-P(u)\frac{|x|^2}{4 \beta ^2 (t+1)^2}+ P(u) \frac{d}{2 \beta (t+1)} \Bigg)(t+1)^{-s} e^{-\frac{|x|^2}{4 \beta (t+1)}}\\
        &= C \Bigg(\frac{|x|^2}{4 \beta ^2 (t+1)^2} \bigg(\beta- P(u)\bigg) + \frac{1}{2 \beta (t+1)}\bigg(d P(u)- 2s \beta \bigg)\Bigg) F_{s, \beta} (x,t) .
    \end{split}
\end{equation*}
Now, the conjecture of the lemma follows directly from this expression along with the assumptions.
\end{proof}

\begin{lemma}\label{land}
Let $w(x,t)= F_{s,\beta} (x,t)$ as in Eq.~\eqref{eq:barrier}. Assume that $s$ and  $\beta$ satisfy the same conditions as in the  previous lemma. If $Lv \leq 0$ and $w\geq v$ on $\Gamma$, then $v(x,t) \leq w(x,t).$
\end{lemma}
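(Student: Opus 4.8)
The plan is to reduce the statement to a single application of the maximum principle on the unbounded domain, applied to the difference $z := w - v$. For the layer $\Omega = \mathbb{R}^d \times [0,T]$, the parabolic boundary $\Gamma$ reduces to the initial slice $\{t=0\}$ (together with the implicit decay of $v$ and $w$ at spatial infinity), so the hypothesis ``$w \ge v$ on $\Gamma$'' is read as $w(x,0) \ge v(x,0)$ for all $x \in \mathbb{R}^d$. First, by the previous lemma, the assumptions $0 < a_0 \le P(u) \le \beta$ and $s \le \frac{a_0 d}{2\beta}$ yield $Lw \ge 0$ in $\Omega$ (with $C=1$ there). Since the operator $L$ is linear in its argument (the coefficient $P(u)$ being fixed by the given function $u$), the assumption $Lv \le 0$ gives $Lz = Lw - Lv \ge 0$ in $\Omega$, while $z(x,0) = w(x,0) - v(x,0) \ge 0$.

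To invoke the unbounded maximum principle (the first lemma of \S\ref{sec:max_ubd}) for $z$, I must check its three structural requirements: (i) the coefficient bound $P(u) \le \epsilon(r) r^2 + C$ with $\epsilon(r) \to 0$, which holds trivially here with $\epsilon(r) \equiv 0$ and $C = \beta$; (ii) a uniform lower bound $z > -m$ for some $m > 0$; and (iii) strict positivity of the data, $z|_{t=0} > 0$. For (ii), since $w = F_{s,\beta} \ge 0$ we have $z = w - v \ge -v$, so it suffices that $v$ be bounded above on $\Omega$. This is part of our standing hypotheses on admissible solutions (continuity on $\bar\Omega$ plus the decay condition~\eqref{eq:decay_bc}; equivalently, via the transition formula $v = \int_0^u P(\xi)\,\rd\xi$, boundedness of $v$ follows from that of $u$), so $z > -m$ with $m := 1 + \sup_\Omega v$.

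Requirement (iii) is the one delicate point, since we only know $z(x,0) \ge 0$, not $z(x,0) > 0$. I would resolve this by a standard perturbation: for $\epsilon > 0$ put $z_\epsilon := z + \epsilon w$. Then $L z_\epsilon = L z + \epsilon\, L w \ge 0$, one still has $z_\epsilon \ge z > -m$ because $w \ge 0$, and at $t = 0$ we get $z_\epsilon(x,0) \ge \epsilon\, w(x,0) = \epsilon\, e^{-|x|^2/(4\beta)} > 0$. The unbounded maximum principle therefore applies to $z_\epsilon$ and gives $z_\epsilon \ge 0$ throughout $\Omega$; letting $\epsilon \to 0$ yields $z \ge 0$, i.e.\ $v(x,t) \le w(x,t)$ in $\Omega$, as claimed.

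If one prefers not to quote the unbounded lemma verbatim, the same conclusion follows by rerunning its proof directly: on each cylinder $Q_{r_0} = \{\, r \le r_0,\ 0 \le t \le T \,\}$ introduce $\tilde z(x,t) = \frac{m}{r_0^2}\big(r^2 + Kt\big)e^{\alpha t} + z(x,t)$; choosing $K > 2d\beta$ (and any $\alpha > 0$) forces $L\tilde z > 0$ in $Q_{r_0}$ since $P(u) \le \beta$ and $Lz \ge 0$, while $\tilde z \ge 0$ on the bottom $\{t=0\}$ (as $z(x,0)\ge 0$) and on the lateral boundary $\{r = r_0\}$ (there $\tilde z \ge m + z \ge m - m = 0$ using $z > -m$). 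Lemma~\ref{maximum-principle_weak} then gives $\tilde z \ge \min_{\Gamma(Q_{r_0})}\tilde z \ge 0$ in $Q_{r_0}$, and sending $r_0 \to \infty$ with $(x,t)$ fixed kills the added term, leaving $z \ge 0$. In either route the main obstacle is identical — securing the lower bound on $v$ (hence on $z$) at spatial infinity so that the sweeping/barrier argument closes — and it is handled by the a priori boundedness of $v$ noted above.
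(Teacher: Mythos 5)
Your proposal is correct, and it uses the same basic decomposition as the paper --- form $\zeta = w - v$, note $L\zeta = Lw - Lv \ge Lw \ge 0$ by the preceding barrier lemma, and compare on the parabolic boundary --- but it closes the argument through a different key lemma. The paper simply cites Lemma~\ref{maximum_principle_strong}, i.e.\ the maximum principle proved on the \emph{bounded} cylinder $\Omega = U\times(0,T]$, and concludes $\zeta \ge 0$; strictly speaking that lemma does not apply verbatim on the unbounded layer $\mathbb{R}^d\times(0,T]$, so the paper is implicitly relying on the growth/decay control that \S\ref{sec:max_ubd} was set up to provide. You instead route the argument through the unbounded-domain lemma (or, equivalently, rerun its cylinder-and-barrier proof on $Q_{r_0}$ and let $r_0\to\infty$), and you verify its hypotheses explicitly: the coefficient bound with $\epsilon(r)\equiv 0$, $C=\beta$; the uniform lower bound $\zeta > -m$, which you obtain from boundedness of $v$ (an extra, but mild and physically natural, standing assumption the paper never makes explicit); and strict initial positivity, which you recover by the perturbation $z_\epsilon = z + \epsilon w$ and the limit $\epsilon\to 0$. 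What your version buys is rigor at spatial infinity --- the sweeping argument genuinely closes on the unbounded domain --- at the cost of the explicit boundedness hypothesis on $v$ and a slightly longer proof; the paper's version is shorter but leaves the unboundedness issue (and the lower-bound-at-infinity issue) unaddressed. Both establish the stated comparison $v \le F_{s,\beta}$.
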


\begin{proof}
Consider,
\begin{equation*}
    \zeta(x,t) = w(x,t) - v(x,t) .
\end{equation*}
Then,
\begin{equation*}
  L\zeta =Lw - Lv  .
\end{equation*}
Since $Lv\leq 0$,
\begin{equation*}
    L\zeta \geq L w .
\end{equation*}

Then, due to the conditions on $s$ and $\beta$, $L\zeta \geq0$. 
Also, $w(x,0)\geq v(x,0)$ implies $\zeta(x,0)\geq 0$. 
Then by Lemma~\ref{maximum_principle_strong}, $\zeta(x,t) = F_{s,\beta} (x,t)- v(x, t) \geq 0$ in $\Omega$, which implies,
$$v(x,t) \leq  F_{s, \beta}(x,t)$$
in $\Omega$ for $s>0$ and a positive constant $\beta$.
\end{proof}

From the above results and the positivity of the function $u$, it  follows that  $0<a_0\leq P(u)\leq const $. Therefore, one can apply Aronson's estimate \citep{Aronson67} for the Green function of the divergence-form problem $\tilde{L}u=0$ with bounded coefficients. Namely: 
\begin{theorem}
  There exists  $G(x,t)$, Green function of the Cauchy problem for the equation $\tilde{L}u=0$, s.t.
  $$C_{2} t^{-\frac{d}{2}}e^{-\frac{|x|^2}{4 \beta_{-}t}} \leq G(x,t) \leq C_{1} t^{-\frac{d}{2}}e^{-\frac{|x|^2}{4 \beta_{+}t}}$$
  for some constants $C_1$ and $C_2$, at every $(x,t)\in\Omega$ ($t>0$).
\label{theorem:bound}
\end{theorem}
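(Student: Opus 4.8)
The plan is to recognize that, once $P(u(x,t))$ has been pinned between two positive constants, the object $G$ is simply the fundamental solution of a \emph{linear} divergence-form uniformly parabolic operator with bounded (measurable, in fact continuous) coefficients, to which the classical two-sided Gaussian bounds of \citet{Aronson67} apply verbatim. Thus the proof splits into two parts: (i) establish the coefficient bounds $0<a_0\le P(u(x,t))\le\Lambda<\infty$ on $\Omega$; and (ii) invoke Aronson's theorem for the linearized operator.

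For (i), the lower bound is immediate: $P$ is a polynomial with non-negative coefficients, $a_0>0$, and $u\ge 0$, so $P(u(x,t))\ge a_0>0$ pointwise. For the upper bound, set $M_0:=\max_x u_0(x)$ and $v:=\int_0^u P(\xi)\,\rd\xi$; by Theorem~\ref{u-to-v-thm} and the identity \eqref{v-u-rel}, $Lv=P(u)\,\tilde{L}u=0\le 0$. The decay condition \eqref{eq:decay_bc} forces $u\to 0$, hence $P(u)\to a_0$, as $r\to\infty$, so the growth hypothesis of Lemma~\ref{lemma-negative} is satisfied; applying that lemma gives $v(x,t)\le\max_x v(x,0)=\int_0^{M_0}P(\xi)\,\rd\xi$. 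Since $u\mapsto\int_0^u P(\xi)\,\rd\xi$ is strictly increasing (as $P>0$), this inverts to $u(x,t)\le M_0$ for all $(x,t)\in\Omega$, whence $P(u(x,t))\le P(M_0)=:\Lambda$.

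For (ii), freeze the coefficient along the solution under consideration: set $a(x,t):=P(u(x,t))$ and consider the linear Cauchy problem $\partial_t w-\nabla\cdot\bigl(a(x,t)\nabla w\bigr)=0$ on $\mathbb{R}^d\times(0,\infty)$. By part (i) this operator is uniformly parabolic with bounded coefficients, so (its existence in this setting being classical, cf.\ \citet{Aronson67}) it admits a fundamental solution $G(x,t)$ with pole at the origin at $t=0$, and Aronson's estimate yields constants $C_1,C_2>0$ and $0<\beta_-\le\beta_+$, depending only on $d$, $a_0$ and $\Lambda$, such that
\[
 C_2\, t^{-\frac{d}{2}} e^{-\frac{|x|^2}{4\beta_- t}} \;\le\; G(x,t) \;\le\; C_1\, t^{-\frac{d}{2}} e^{-\frac{|x|^2}{4\beta_+ t}}
\]
for every $x\in\mathbb{R}^d$ and $t>0$, which is the claim.

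The only genuinely delicate point is interpreting ``the Green function of $\tilde{L}u=0$,'' since $\tilde{L}$ is nonlinear and has no fundamental solution in the usual sense. The resolution is the one used above: $G$ is the fundamental solution of the \emph{linearization with coefficients frozen along the solution} $u$, i.e.\ of $\partial_t w-\nabla\cdot(P(u(x,t))\nabla w)=0$. This is precisely the comparison operator that is needed, because the barrier estimates of the preceding lemmas and the comparison principle (Lemma~\ref{comparison_theorem}) are all phrased for the operator $L$ with coefficient $P(u)$; hence the Gaussian bounds on $G$ can be combined with those results to sandwich $v$ (equivalently $u$) between two honest linear fundamental solutions, which is the whole purpose of the theorem. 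As a secondary benefit, Aronson's estimate upgrades the ad hoc barriers $F_{s,\beta}$ of Lemma~\ref{land} by fixing the decay exponent at the sharp value $d/2$ rather than an unspecified $s\le a_0 d/(2\beta)$.
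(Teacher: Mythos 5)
Your proposal follows the same route as the paper: the paper's entire argument is the observation that the preceding lemmas give $0<a_0\le P(u)\le \mathrm{const}$, after which $\tilde{L}$ is rewritten as $\partial_t(\cdot)-\nabla\cdot\bigl(a(x,t)\nabla(\cdot)\bigr)$ with $a(x,t)=P(u(x,t))$ frozen along the solution and Aronson's estimate is invoked for this uniformly parabolic divergence-form operator, exactly as in your part (ii) and your interpretation of ``the Green function.'' Your write-up is in fact somewhat more careful than the paper's, since you make explicit how the upper bound $P(u)\le P(M_0)$ follows from Lemma~\ref{lemma-negative} and the monotonicity of $u\mapsto\int_0^u P(\xi)\,\rd\xi$, where the paper simply asserts it ``from the above results.''
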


\begin{remark}
In Aronson's estimate \citep{Aronson67} discussed above, $\beta_{\mp}$ depend on the ellipticity constant and the spatial bounds of the divergence-form equation's coefficients. Specifically, if we write $\tilde{L}(\cdot)=\frac{\partial}{\partial t}(\cdot)-  \nabla\cdot(a(x,t) \nabla(\cdot))$, then the constants $\beta_\mp$ can be such that $\beta_{-} \leq a(x,t) \leq \beta_{+}$. Of course, in this representation needed to apply Aronson's estimate, the coefficient $a(x,t)$ depends on $u(x,t)$ directly. Then, since $u(x,t) \to 0$ as $t \to \infty$, it follows that $\beta_{-}=\beta_{+}=a_0$ (recall $a_0=P(0)$) in the limit as $t \to \infty$.
\end{remark}

Thus, finally, we have achieved our main conclusion: 
Let $u(x,t)$ be a solution of the Cauchy problem for $\tilde{L}u=0$, with $\tilde{L}$ as defined in Eq.~\eqref{conserv-dif_eq}, and  $ 0<a_0\le P(u) \leq \epsilon(r) r^2+ C$ with $\lim_{r \to \infty}\epsilon(r)=0$. Then, starting from a compact initial condition (such as that given by Eq.~\eqref{eq:u_ic_box} below, or other choices suitable for executing the proof with the barrier function from Eq.~\eqref{eq:barrier}), there exists constants $c_1$ and $c_2$ such that
\begin{equation}
    c_2 e^{-\frac{|x|^2}{4 a_0 t}} \leq t^{\frac{d}{2}}u(x,t) \leq c_1 e^{-\frac{|x|^2}{4 a_0 t}} \qquad\text{as}\quad t\to \infty.
\label{eq:final_bound}
\end{equation}
This mathematical result holds for any non-degenerate polynomial diffusivity function $P(u)$ as in Eq.~\eqref{eq:P(u)}, and thus significantly constrains the long-time asymptotic scalings that solutions to non-degenerate non-linear diffusion equations can exhibit. Determining, or at least constraining, the possible scaling behaviors of such solutions was our motivating scientific question, which we have now answered.

\section{Numerical experiments to illustrate the main mathematical results}
\label{sec:numerical}

In this section, we illustrate our mathematical results (in $d=1$ dimensions) with selected numerical simulations. Specifically, we show that the Green function $c_1 t^{-\frac{1}{2}} e^{-\frac{|x|^2}{4 a_0 t}}$, as in Eq.~\eqref{eq:final_bound}, does indeed describe, quantitatively, the long-time asymptotic behavior of the solutions of non-degenerate non-linear parabolic equations.

There are many numerical methods that one can use to solve the scalar parabolic equation~\eqref{p-u-equation} subject to the initial condition~\eqref{eq:u_ic} \citep{Strikwerda}. For simplicity, we use the \texttt{pdepe} subroutine of \textsc{Matlab} 2019b (Mathworks, Inc.), which is based on an auto-generated finite-element discretization and the method of lines, as described by \citet{Skeel90}. On a finite length domain, $x\in[-x_\mathrm{max},+x_\mathrm{max}]$ ($0< x_\mathrm{max}<\infty$), the asymptotic decay condition~\eqref{eq:decay_bc} must be replaced with an appropriate BC at $x=\pm x_\mathrm{max}$. In our numerical examples, we choose the interval to be large enough ($x_\mathrm{max}=200$ or larger, depending on the final simulation time $T$), so that this boundary condition does not influence the diffusion process of a localized initial condition. Then, we impose the ``natural'' (Neumann) boundary conditions
\begin{equation}
    \left.\frac{\partial u}{\partial x}\right|_{x=\pm x_\mathrm{max}} = 0 \qquad \forall t\in [0,T].
\label{eq:Neumann_bc_u}
\end{equation}
At least $10\,000$ $x$-grid points are used for the discretization, and time integration is performed by the adaptive, variable-order multistep stiff solver \texttt{ode15s} in \textsc{Matlab} \citep{Shampine97}.

\subsection{Compact initial condition}
\label{sec:num_comp_ic}

First, we take the initial condition, $u(x,0)=u_0(x)$, to be a box of unit area:
\begin{equation}
u_0(x) = \begin{cases} 
\displaystyle \frac{1}{2x_0}, &\quad|x|\le x_0,\\ 
0, &\quad|x|>x_0.
\end{cases}
\label{eq:u_ic_box}
\end{equation}
We take $x_0 =1$ without loss of generality, but we do note that various constants (in bounds, etc.) will depend on $x_0$. Then, we solve numerically the initial-boundary-value problem consisting of Eqs.~\eqref{p-u-equation}, \eqref{eq:Neumann_bc_u}, \eqref{eq:u_ic_box} on the finite space-time domain $[-x_\mathrm{max},+x_\mathrm{max}]\times(0,T]$.

\begin{figure}
    \centering
    \subfigure[]{\includegraphics[width=0.75\textwidth]{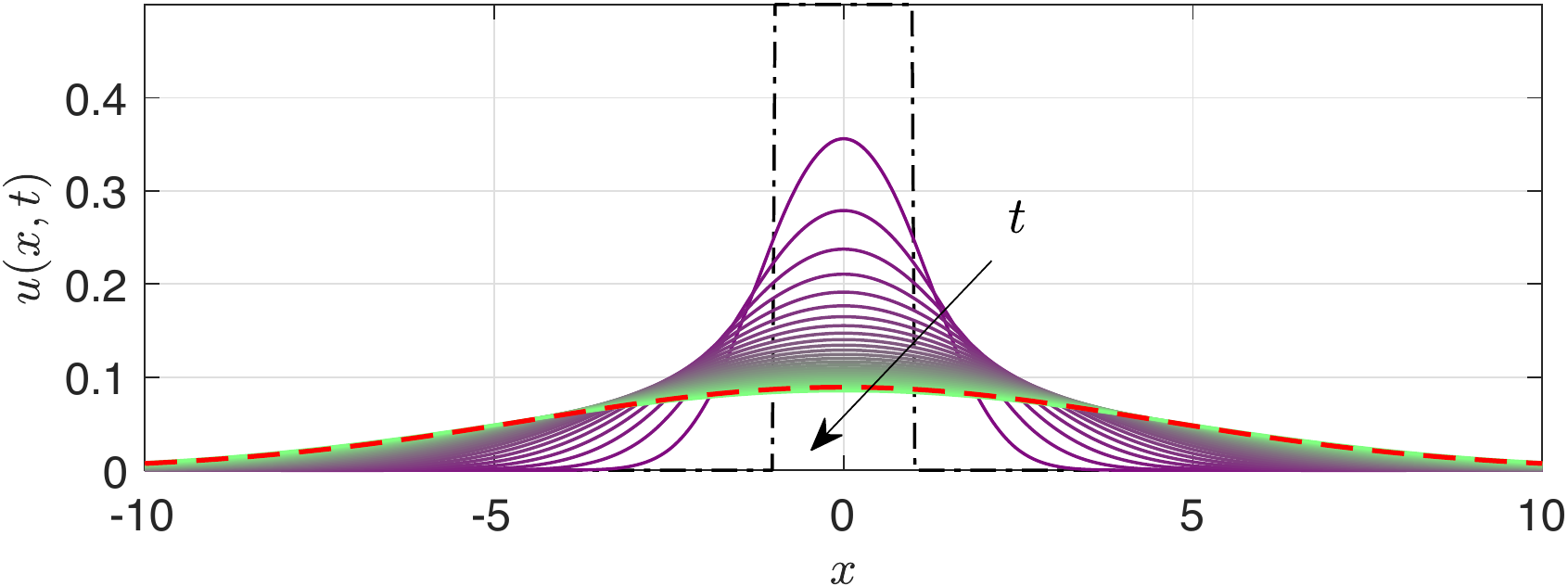}}
    \subfigure[]{\includegraphics[width=0.75\textwidth]{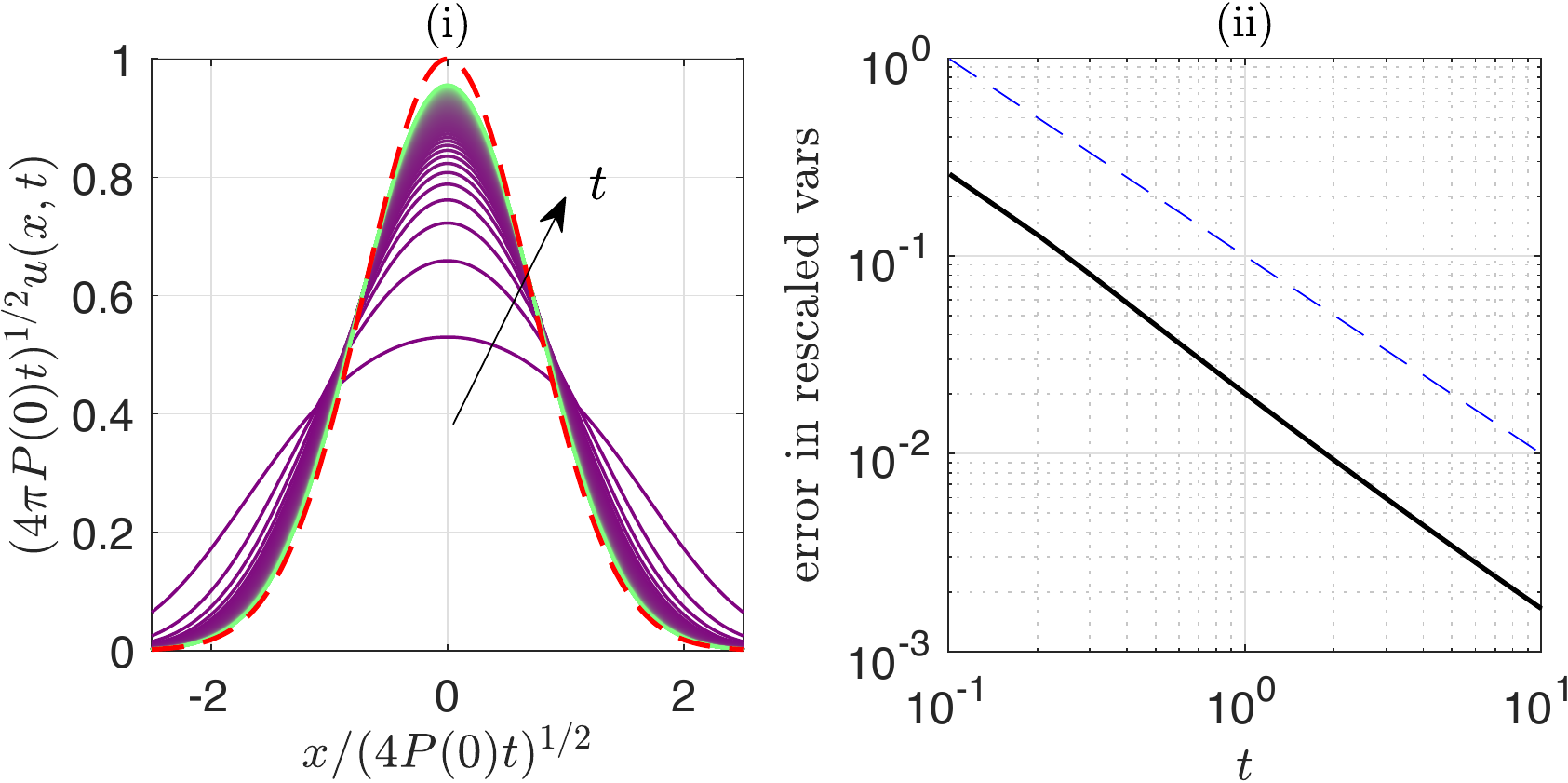}}
    \caption{(a) Time-evolution of $u(x,t)$ (visualized by 100 solution curves with color changing from purple/dark to green/light, from $t=0$ to $t=T=10$) starting from a unit box initial condition~\eqref{eq:u_ic_box} (dash-dotted curve) at $t=0$, with $P(u) = 1+u$. (b,i) At long times, when rescaled in the ``normal'' diffusive way, the solution appears to converge the Gaussian/fundamental solution~\eqref{eq:Gaussian_soln} of the linear problem with  $P(u) = 1$ (dashed curve). (b,ii) The $\mathbb{L}^2$ norm of the difference between the numerical solution of the non-linear problem and the Gaussian profile, having both been expressed in rescaled variables as in (b,i), decays algebraically in time (dashed line is a reference slope of $t^{-1}$). In all plots, the abscissas have been truncated for clarity.}
    \label{fig:example_diffusion}
\end{figure}

Figure~\ref{fig:example_diffusion}(a) shows an example time-evolution of this non-linear diffusion process. Clearly, the long-time numerical solution to the non-linear problem with $P(u) = 1 + u$ (note $P(0) = a_0 = 1$) converges, visually at least, as $t\to\infty$ to the fundamental (Gaussian) solution 
\begin{equation}
u_G(x,t) = \frac{1}{\sqrt{4\pi P(0)t}}e^{-\frac{x^2}{4P(0)t}}
\label{eq:Gaussian_soln}
\end{equation}
of the linear problem with $P(u) = 1$ and an initial condition of unit area, $\int_{-\infty}^{+\infty}u_0(x)\,\rd x=1$. Note that, since we are interested in the long-time asymptotics (specifically, the \emph{scaling} of the solution), we neglect details arising from the fact the initial condition~\eqref{eq:u_ic_box} \citep[see][]{Kleinstein71,Witelski98,Christov12} is not a point source (Dirac $\delta$) for which, specifically, the exact solution to the linear problem is the fundamental solution in Eq.~\eqref{eq:Gaussian_soln}.

On rescaling the solution $u(x,t)$ of the non-linear diffusion problem using the ``normal'' (linear) diffusion scalings as $u(x,t)\mapsto(4\pi P(0)t)^{1/2}U(\xi)$ with $\xi=x/(4P(0)t)^{1/2}$, in Fig.~\ref{fig:example_diffusion}(b,i), we observe the curves begin to approach the Gaussian profile $U(\xi) = e^{-\xi^2}$. The bound, Eq.~\eqref{eq:final_bound} proved in \S\ref{sec:max_ubd}, is difficult to evaluate point-wise numerically because $\beta$ in the barrier function~\eqref{eq:barrier} (and $\beta_\mp$ in Theorem \ref{theorem:bound}) can depend, in particular, on $t$. However, in the rescaled coordinates, it is possible to evaluate the bound in an $\mathbb{L}^2$ sense by computing the norm of the difference between the rescaled solution of the non-linear problem and the Gaussian profile $U(\xi) = e^{-\xi^2}$. This ``error'' decays algebraically in time, as Fig.~\ref{fig:example_diffusion}(b,ii) shows. Note that the algebraic decay in $t$ is expected from previous estimates of the convergence rate of solutions to nonlinear parabolic equations, starting from arbitrary initial data, towards their self-similar intermediate asymptotics  \citep{Kleinstein71,Witelski98,Bernoff10}. The numerical observation that the norm of the ``error'' decays in time indicates that the bound proved in \S\ref{sec:max_ubd} is accurate, and the solution of the non-linear problem converges (as $t\to\infty$) to the Gaussian profile.

\begin{figure}
    \centering
    \subfigure[]{\includegraphics[width=0.75\textwidth]{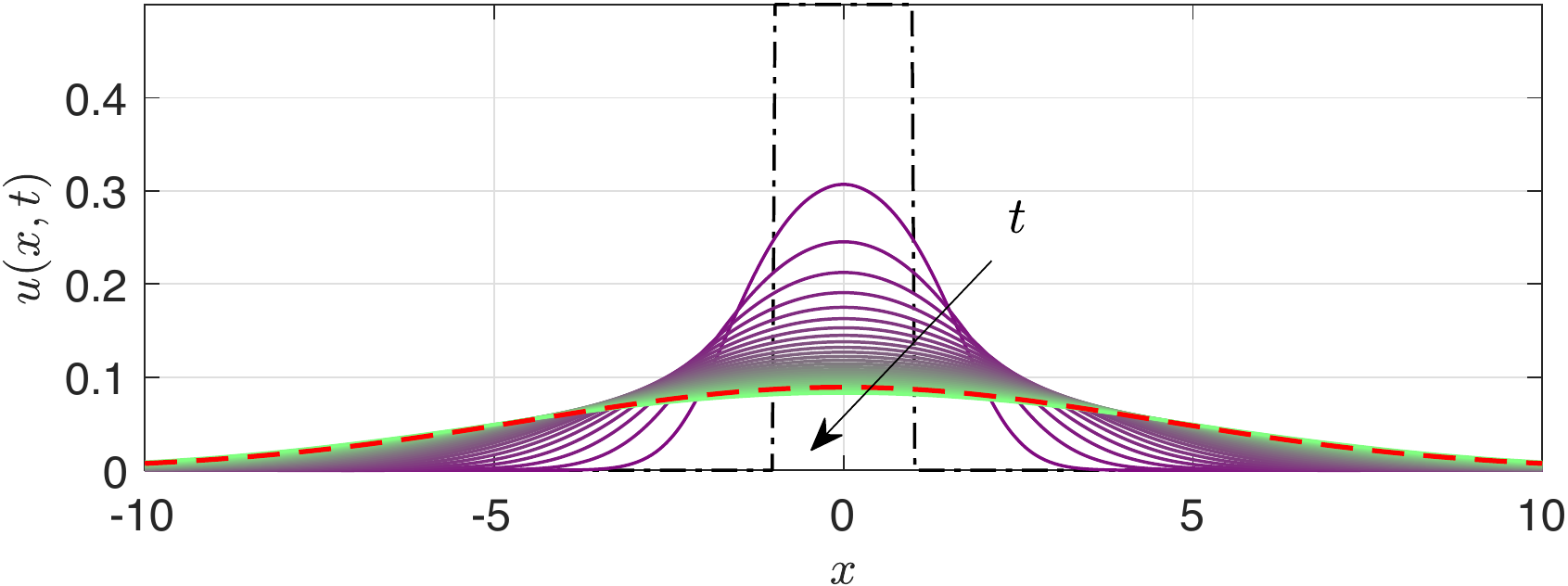}}
    \subfigure[]{\includegraphics[width=0.75\textwidth]{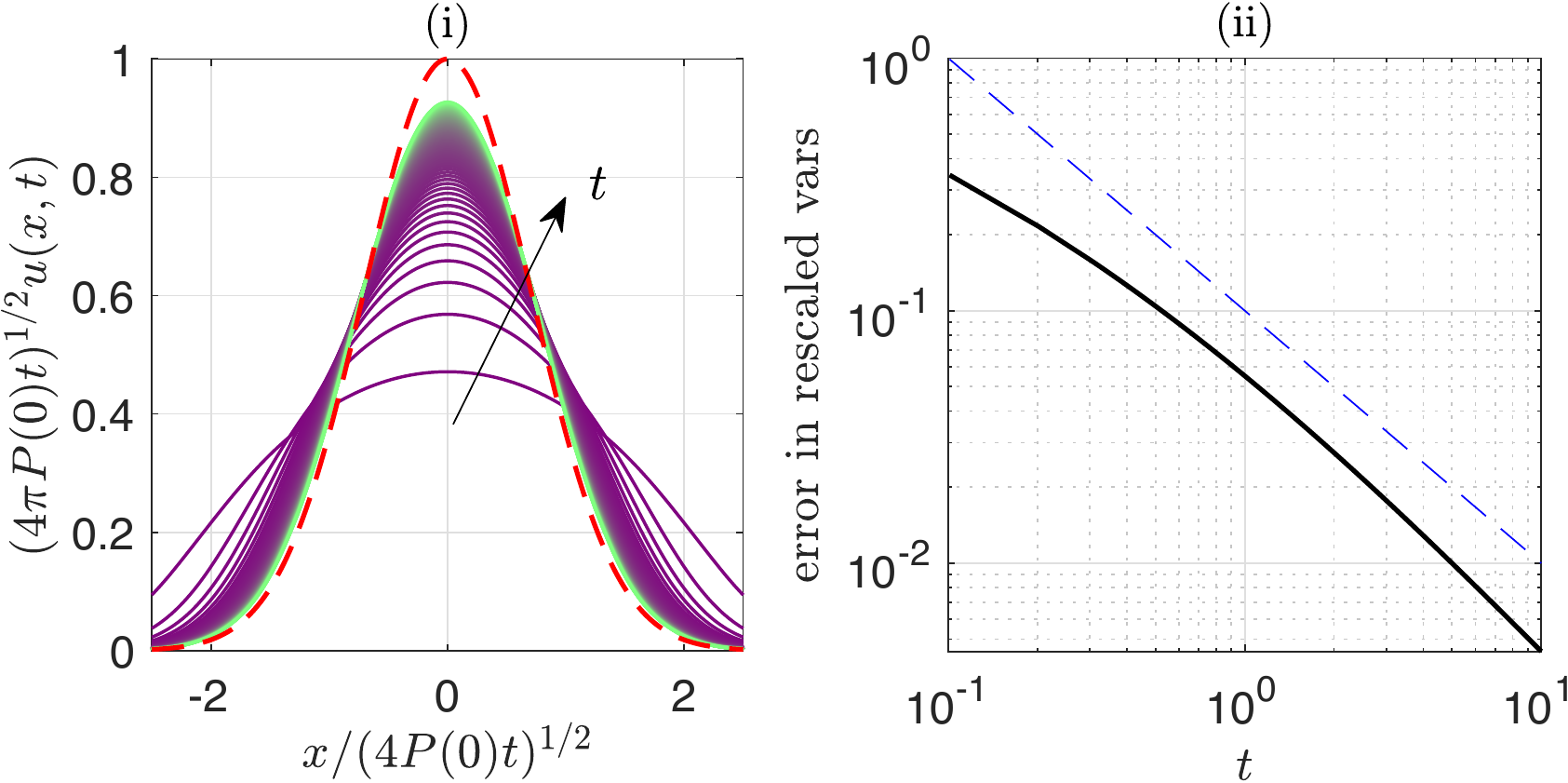}}
    \caption{Same as Fig.~\ref{fig:example_diffusion} but with the diffusivity $P(u) = 1 + u + 10u^2$ being a quadratic polynomial.}
    \label{fig:example2_diffusion}
\end{figure}

Figure~\ref{fig:example2_diffusion} shows the equivalent of Fig.~\ref{fig:example_diffusion} but with $P(u) = 1 + u + 10u^2$. Clearly, as predicted by the mathematical theory, the long-time asymptotics are similar but the constants in the bound~\eqref{eq:final_bound}, and prior theorems and lemmas, change. Furthermore, due to $10u^2$ dominating $1+u$ at early times, the approach to the ultimate long-time asymptotics takes longer, thus the convergence in Fig.~\ref{fig:example2_diffusion}(b) looks worse than in Fig.~\ref{fig:example_diffusion}(b) for the same integration time-interval: $t\in(0,T=10]$.

\begin{figure}
    \centering
    \subfigure[]{\includegraphics[width=0.75\textwidth]{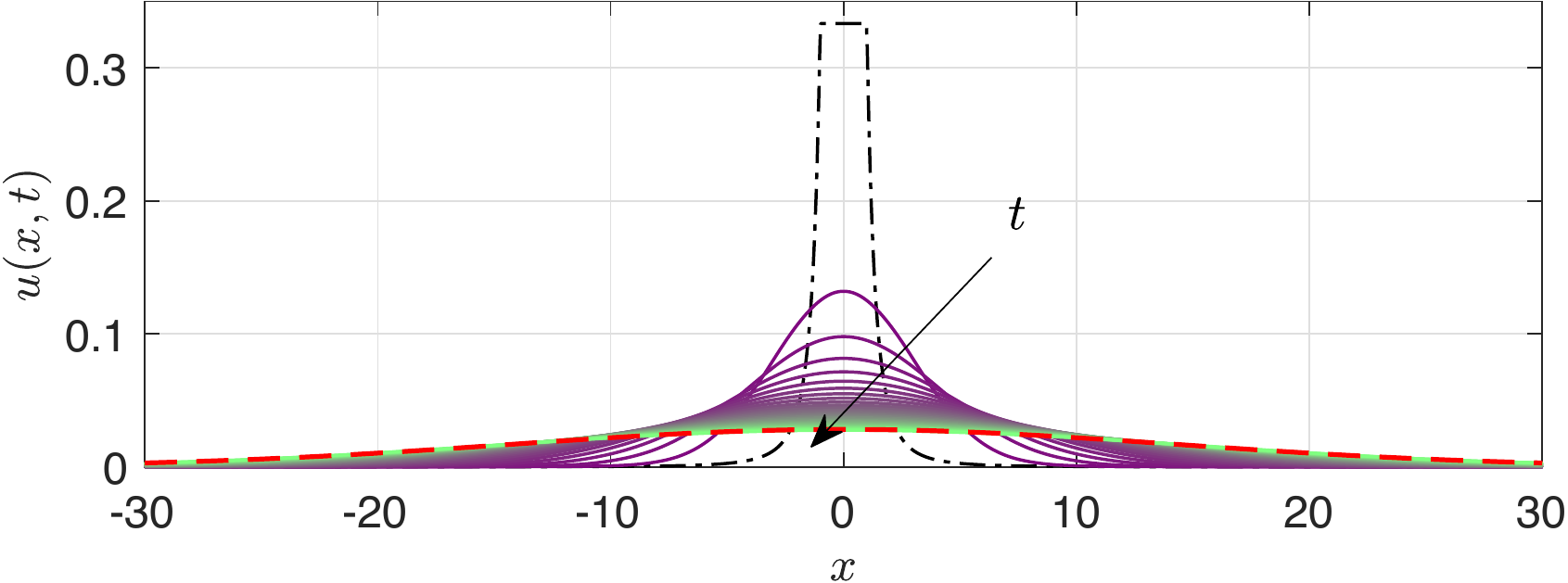}}
    \subfigure[]{\includegraphics[width=0.75\textwidth]{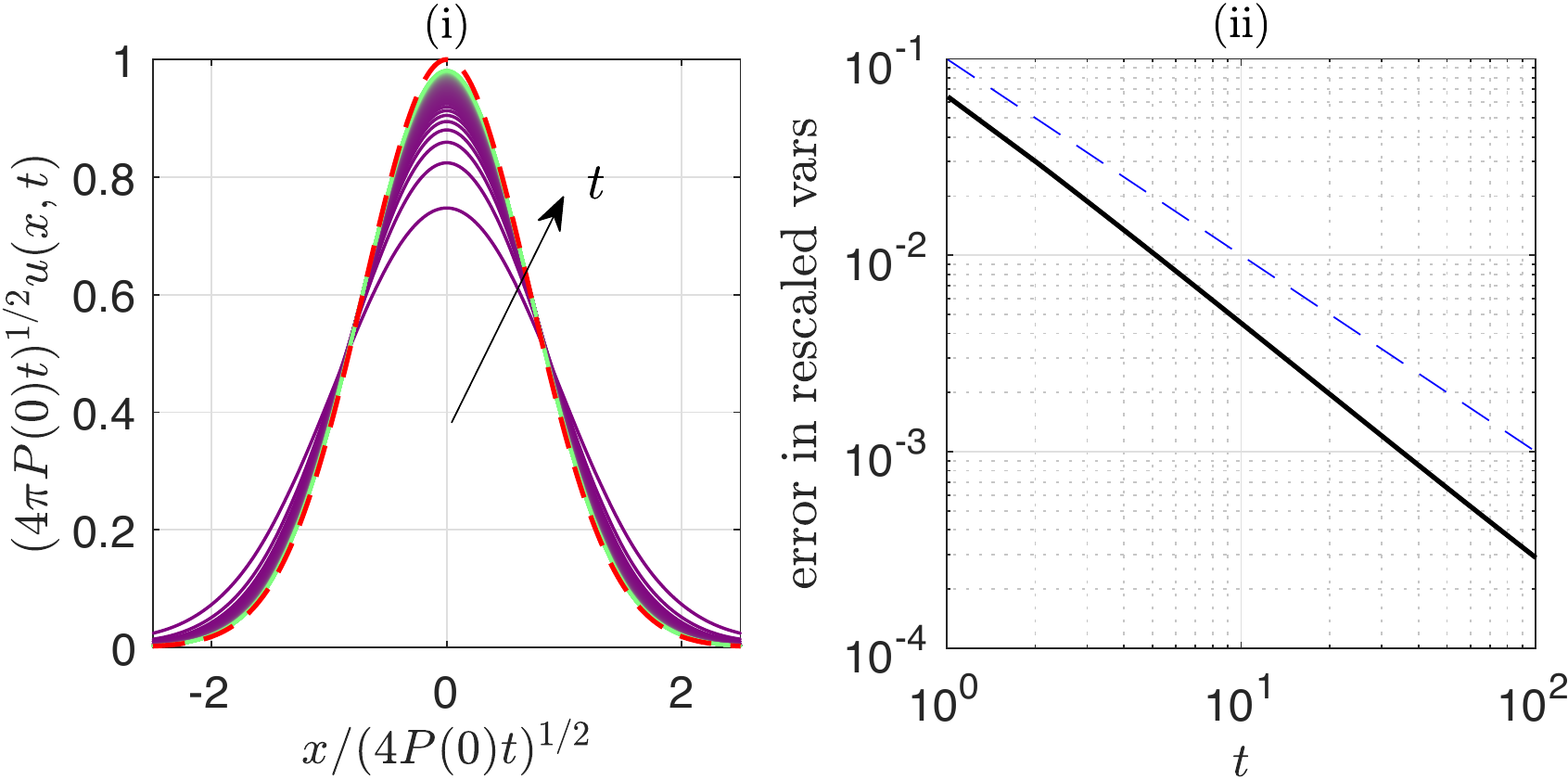}}
    \caption{Same as Fig.~\ref{fig:example_diffusion} but starting from the non-compact initial condition in Eq.~\eqref{eq:u_ic_decay}.}
    \label{fig:example3_diffusion}
\end{figure}

\subsection{Non-compact initial condition with algebraic decay}
\label{sec:num_noncomp_ic}

Second, we take $u_0(x)$ to have slow algebraic decay at infinity as per Eq.~\eqref{eq:decay_bc}:
\begin{equation}
u_0(x) = \left(\frac{\gamma-1}{2[x_0(\gamma-1)+1]}\right) \begin{cases} 
\displaystyle 1, &\quad|x|\le x_0,\\ 
(|x| - x_0 + 1)^{-\gamma}, &\quad|x|>x_0.
\end{cases}
\label{eq:u_ic_decay}
\end{equation}
Note that the function was chosen so that we still have unit ``mass,'' i.e., $\int_{-\infty}^{+\infty} u_0(x) \, \rd x = 1 < \infty$, which restricts the decay rate to  $\gamma > 1$. Again, $x_0 = 1$ without loss of generality.

Let us take $\gamma=3$ for the remainder of this numerical experiment. Figure~\ref{fig:example3_diffusion} shows the equivalent of Fig.~\ref{fig:example_diffusion} but starting from the initial condition in Eq.~\eqref{eq:u_ic_decay}. Clearly, the long-time asymptotics are similar as in the previous two examples in \S\ref{sec:num_comp_ic} starting from compact initial conditions. Of course, the constants in the various theorems and lemmas change, and there are quantitative differences in the convergence process. Importantly, to reach ``sufficiently large'' $t\gg1$ requires a much longer total simulation time $T$. Here, we had to use $x_\mathrm{max}=2\,000$, $100\,000$ $x$-grid points and integrate up to $T=100$ to observe the long-time asymptotics. Values of $\gamma$ closer to $1$ require even more computational care and resources.

This example is interesting as our construction, leading up to the main result in \S\ref{sec:max_ubd}, was based on a barrier function, Eq.~\eqref{eq:barrier}, with exponential decay as $|x|\to\infty$. However, the initial condition (and, hence, the solution) in this example does not satisfy this condition, having only algebraic decay. Yet, the numerical results suggest that the final bound obtained in Eq.~\eqref{eq:final_bound}, holds just as well for this example, a case that violates the assumptions of the theorems and lemmas proved above.


\section{Conclusion}
\label{sec:conclusion}

In this paper, we discussed how Einstein's random walk paradigm can be employed to derive non-linear parabolic equations in non-divergence form, as a continuum description of a random-walk diffusion process. Then, by proving that a mapping between divergence-form and non-divergence-form parabolic equations exists, we connected the derivation from the Einstein paradigm to the traditional derivation of the diffusion equation from a conservation law, i.e., the continuity equation along with Fick's (or Fourier's, etc.) law. 

The mapping theorem enabled us to obtain accurate estimates for the Green function of the Cauchy problem for the divergence-form non-linear diffusion equation. These estimates were used to prove bounds on the solution of the non-linear diffusion equation, from both above and below, and establish the long-time asymptotics of the non-linear equation's solutions. Specifically, we proved that the solution to the non-degenerate non-linear problem converges to the fundamental solution (Gaussian distribution) of the linear diffusion problem, leading to the bound in Eq.~\eqref{eq:final_bound}, which is valid from above and from below.  Numerical simulations for some example non-linear equations quantitatively support the mathematical results proved via Aronson's estimate.

Importantly, the present work sheds light on the issue of anomalous diffusion scalings in certain areas of applied physics. Specifically, by proving that the fundamental solution of the linear diffusion problem is the long-time asymptotic behavior (from below and from above) of solutions to \emph{any} non-degenerate parabolic equation with strictly positive polynomial diffusivity, from arbitrary initial data, we are led to suggest that any ``anomaly'' in the scalings might be an artefact of the short time-duration of an experiment. Ultimately, the physics must justify whether degeneracy of the governing equation is expected (or not) and why. Indeed, for degenerate non-linear parabolic equations, a wealth of different scaling functions and transformations (different from the one defined in Eq.~\eqref{eq:barrier}), leading to bounds different from Eq.~\eqref{eq:final_bound}, are allowed \citep[see, e.g.,][]{Barenblatt96,Witelski98}. Non-degeneracy, on the other hand as we proved above, imposes \emph{strict} restrictions on the asymptotic scaling behavior of a non-linear diffusion equation's solutions.

Interestingly, the numerical example in \S\ref{sec:num_noncomp_ic} suggests that our mathematical results also hold for initial conditions and solutions that lack the exponential decay required by the barrier function in Eq.~\eqref{eq:barrier}. Therefore, in future work, it would be of interest to attempt, or to determine whether it is even possible, to generalize the proof in \S\ref{sec:max_ubd} using a barrier function with algebraic decay as $|x|\to\infty$.

Finally, it should be noted that, in an interesting paper, \citet{Bricmont94} used the re-normalization group (RG) method \citep{Wilson83} to prove certain results about the asymptotic behaviour of non-linear diffusion equations. Specifically, taking the difference between the fundamental solution of the linear problem (Gaussian distribution) and the solution of the non-linear PDE in divergence form, they are able to prove convergence at a point $(x,t)=(\sqrt{t},t)$. The RG method is generic, and it does not use the maximum principle. On the other hand, our approach is based on maximum principle leading to an Aronson estimate. Importantly, our approach is applicable to both non-linear equations in both divergence and in non-divergence form; in the latter case, the coefficients may even depend on the spatial variable. As a result, our approach leads to an accurate estimate, from both above and below, of the solution of the non-linear equation in terms of Aronson's Green function. That is to say, the estimates proved in this work cannot be obtained via the RG method of \citet{Bricmont94}.

\section*{Acknowledgements}
Acknowledgment is made to the donors of the American Chemical Society Petroleum Research Fund for partial support of I.C.C.,  under ACS PRF award \# 57371-DNI9, during the course of this research. I.C.C.\ thanks H.A.\ Stone for many insightful discussion about diffusion, scalings, and self-similarity.

\section*{Data Availability Statement}
The data that support the findings of this study are available from the corresponding author upon reasonable request.


\bibliography{references.bib}

\end{document}